\newtheorem{theorem}[equation]{Theorem}
\newtheorem{lemma}[equation]{Lemma}
\newtheorem{proposition}[equation]{Proposition}
\newtheorem{corollary}[equation]{Corollary}
\theoremstyle{definition}
\newtheorem{definition}[equation]{Definition}
\theoremstyle{remark}
\newtheorem{remark}[equation]{Remark}
\numberwithin{equation}{section}
\newcommand{\osf}{{\normalfont \textsf{X}}}
\newcommand{\lang}{\CL_{\osf}}
\newcommand{\ualgshift}{\TCA_R(\osf)}
\newcommand{\udalgshift}{\TCD_R(\osf)}
\newcommand{\alf}{\mathscr{A}}
\newcommand{\N}{\mathbb{N}}
\newcommand{\Z}{\mathbb{Z}}
\newcommand{\F}{\mathbb{F}}
\newcommand{\CA}{\mathcal{A}}
\newcommand{\CD}{\mathcal{D}}
\newcommand{\CF}{\mathcal{F}}
\newcommand{\CL}{\mathcal{L}}
\newcommand{\TCA}{\widetilde{\CA}}
\newcommand{\TCB}{\mathcal{U}}
\newcommand{\TCD}{\widetilde{\CD}}
\newcommand{\tauh}{\widehat{\tau}}
\newcommand{\nn}{\mathbb{N}}
\newcommand{\zn}{\mathbb{Z}}
\newcommand{\eword}{\omega}
\newcommand{\vecspan}{\operatorname{span}}
\title[The reduction theorem for algebras of subshifts]{The reduction theorem for algebras of one-sided subshifts over arbitrary alphabets}
\author[D. Bagio]{Dirceu Bagio}
\author[C. Gil Canto]{Cristóbal Gil Canto}
\author[D. Gonçalves]{Daniel Gonçalves}
\address[Dirceu Bagio, Daniel Gonçalves and Danilo Royer]{Departamento de Matem\'atica, Universidade Federal de Santa Catarina, 88040-970 Florian\'opolis SC, Brazil. }
\email{d.bagio@ufsc.br \\ daemig@gmail.com \\ daniloroyer@gmail.com}
\author[D. Royer]{Danilo Royer}
\address[Cristóbal Gil Canto]{Departamento de Matem\'atica Aplicada, E.T.S. Ingenier\'\i a Inform\'atica, Universidad de M\'alaga, Campus de Teatinos s/n. 29071 M\'alaga.   Spain.}
\email{cgilc@uma.es}
\begin{document}

\keywords{Subshift algebra, reduction theorem, partial skew group rings, semiprime rings, semiprimitive rings}
\subjclass[2020]{Primary: 16S10, 16S88, 16N20, 16N60}

\thanks{The third author was partially supported by Fapesc - Fundação de Amparo à Pesquisa e Inovação do Estado de Santa Catarina, Capes-Print, and CNPq - Conselho Nacional de Desenvolvimento Científico e Tecnológico - Brazil. The second author was supported by the Spanish Ministerio de Ciencia e Innovaci\'on   through project  PID2019-104236GB-I00/AEI/10.13039/\- 501100011033 and by the Junta de Andaluc\'{i}a  through project FQM-336,  all of them with FEDER funds. The second author gratefully acknowledges the hospitality during his research stay at Universidade Federal de Santa Catarina, Brazil.}

\begin{abstract} Let $R$ be a commutative unital ring, $\osf$ a subshift, and $\ualgshift$ the corresponding unital subshift algebra. We establish the reduction theorem for $\ualgshift$. As a consequence, we obtain a Cuntz-Krieger uniqueness theorem for $\ualgshift$ and we show that $\ualgshift$ is semiprimitive (resp. semiprime) whenever $R$ is a field (resp. a domain).
\end{abstract}

\maketitle

\section{Introduction}

The study of subshifts in symbolic dynamics is closely related to non-commutative algebras. In \cite{CuntzKrieger}, the famous Cuntz-Krieger algebra was associated with a subshift of finite type. A subshift of finite type over a finite alphabet can be viewed as the edge subshift associated with a graph and this has motivated the definition of graph algebras, both in the analytical and algebraic context. The connections of these algebras with symbolic dynamics are well documented. For example, graph $C^*$-algebras are used to characterize orbit equivalence of subshifts associated with directed graphs \cite{BCW}. Leavitt path algebras \cite{AbrAraMol, GroupoidLeavitt} and their classification theory can be used in connection with William's problem in symbolic dynamics \cite{CGGH, HazratClassificationLPA}. 

Symbolic dynamics also focus on the study of subshifts over infinite alphabets, which have practical applications \cite{KitchensBook, LindMarcus}. Recently, in \cite{BGGV}, the authors introduced algebras of one-sided subshifts over arbitrary alphabets, denoted by $\ualgshift$, with the aim of providing an algebraic description of conjugacy between subshifts over arbitrary, possibly infinite, alphabets. For finite alphabets, these conjugacy results can be seen as purely algebraic versions of the $C^*$-algebraic results in \cite{BrixCarlsen}. The authors of \cite{BGGV} proved that the subshift algebras are isomorphic to the Leavitt path algebras of certain labeled graphs \cite{BCGW21}, and they can also be realized as partial skew group rings and Steinberg algebras. Furthermore, they establish a two-way connection between the notion of subshift associated with an infinite countable alphabet given by \cite{OTW} and non-commutative algebras.

In this manuscript, we focus on the unital algebra associated with a subshift and aim to prove a key result in the context of non-commutative algebras: the Reduction Theorem. This theorem states that any nonzero element of a unital subshift algebra can be reduced, via left and right multiplication by appropriate elements of the algebra, to either a nonzero multiple of a projection or a specific sum of terms related to a minimal cycle without exit. A reduction theorem was originally given for Leavitt path algebras in \cite{socle}, which proved to be an extremely useful tool for characterizing ring-theoretic properties of Leavitt path algebras. Similar results can be found in \cite{relativecohn} and \cite{reduction} within the contexts of relative Cohn path algebras and ultragraph Leavitt path algebras, respectively. For subshift algebras, we will utilize our main theorem to establish a uniqueness theorem, akin to the Cuntz-Krieger uniqueness theorem in \cite{LPA_ring}. Additionally, we will prove that the algebras $\ualgshift$ are semiprimitive when the base ring $R$ is a field, and semiprime when the base ring $R$ has no zero divisors (see Corollary \ref{banana}).

The paper is organized as follows. In Section \ref{eleicoes}, we provide an overview of basic elements of symbolic dynamics and show an auxiliary result regarding periodicity properties of words generated by an arbitrary alphabet (Lemma~\ref{gcd1}). In Section~\ref{unital}, we recall the definition of the unital subshift algebra following \cite{BGGV}. For completeness, we include the realization of the unital subshift algebra as a partial skew group ring. We also prove an auxiliary result concerning subshift algebras (Lemma \ref{chucrute}), which is required for the proof of our main theorem. Section~\ref{reduction section} is dedicated to the main theorem of this manuscript, the Reduction Theorem (Theorem~\ref{reduction theorem}). Its proof is presented in several steps. We first introduce the concept of a (minimal) cycle without an exit in a subshift (Definitions~\ref{cyclexit} and \ref{mini} and Lemma~\ref{minimalversion01}), which plays a crucial role in the Reduction Theorem. In the final part of the paper, in Section~\ref{consequencesofreduction}, we extract fundamental results from the Reduction Theorem. We provide a Uniqueness Theorem for subshift algebras (Theorem~\ref{ckuniqueness}) under the assumption that every cycle in the subshift has an exit. Additionally, we demonstrate that a corner of a subshift algebra at a specific projection is isomorphic to the Laurent polynomial algebra (see Lemma~\ref{lem-Laurent-pol}). Furthermore, Proposition~\ref{ecologico} shows that the subshift algebra $\ualgshift$ is semiprimitive if $R$ is a field (particularly, it is semiprime since any semiprimitive ring is semiprime). Finally, we relax the condition on the ring $R$ and conclude that if $R$ has no zero divisors, then $\ualgshift$ is semiprime (Corollary~\ref{banana}).

\section{Symbolic Dynamics}\label{eleicoes}

In this section, we briefly recall the construction of subshifts over an arbitrary alphabet. In our work, $\nn=\{0,1,2,\ldots\}$ denotes the set of natural numbers. Let $\alf$ be a non-empty set, called an \emph{alphabet}, and let $\sigma$ be the \emph{one-sided shift map} on $\alf^\N$, that is, $\sigma$ is the map from $\alf^\N$ to $\alf^\N$ given by $\sigma(x)=y$, where $x=(x(n))_{n\in \N}$ and  $y=(x(n+1))_{n\in \N}$. Elements of $\alf^*:=\bigcup_{k=0}^\infty \alf^k$ are called \emph{blocks} or \emph{words}, and $\omega$ stands for the empty word. We set $\alf^+=\alf^*\setminus\{\eword\}$. Given $\alpha\in\alf^*\cup\alf^{\N}$, $|\alpha|$ denotes the length of $\alpha$ and, for $1\leq i,j\leq |\alpha|$, we define $\alpha(i,j):=\alpha(i)\cdots\alpha(j)$ if $i\leq j$, and $\alpha(i,j)=\eword$ if $i>j$. If $\beta\in\alf^*$, then $\beta\alpha$ denotes the usual concatenation. For a block $\alpha\in \alf^k$, $\alpha^n$ stands for the concatenation of $\alpha$ with itself $n$ times, and $\alpha^\infty$ denotes the infinite word $\alpha \alpha \ldots$. We say that $\beta$ is an \emph{initial segment} of $\alpha \in \alf^k$ if there exists $\alpha'\in \alf^*$ such that $\alpha=\beta \alpha'$, and $\beta$ is a \emph{final segment} of $\alpha \in \alf^k$ is there exists $\alpha''\in \alf^*$ such that $\alpha = \alpha'' \beta$. A subset $\osf \subseteq \alf^\N$ is \emph{invariant} for $\sigma$ if $\sigma (\osf)\subseteq \osf$. For an invariant subset $\osf \subseteq \alf^\N$, we define $\CL_n(\osf)$ as the set of all words of length $n$ that appear in some sequence of $\osf$, that is, 

$$\CL_n(\osf):=\{a(0) \ldots a(n-1)\in \alf^n:\ \exists \ x\in \osf \text{ s.t. } x(0)\ldots x(n-1)=a(0)\ldots a(n-1)\}.$$ 

Clearly, $\CL_n(\alf^\N)=\alf^n$ and we always have that $\CL_0(\osf)=\{\omega\}$.
The \emph{language} of $\osf$ is the set $\lang$, which consists of all finite words that appear in some sequence of $\osf$, that is,
$$\lang:=\bigcup_{n=0}^\infty\CL_n(\osf).$$

\noindent{\bf Notation.} We will write $\{\alpha_i\,:\,i\in I\}$ to denote a family (finite or infinite) of elements of $\lang$, whereas $\alpha(j)$ denotes the letter in the j-th position in the word $\alpha\in \lang$. 

\smallskip

Given $F\subseteq \alf^*$, we define the \emph{subshift} or \emph{shift space} $\osf_F\subseteq \alf^\N$ as the set of all sequences $x$ in $\alf^\N$ such that no word of $x$ belongs to $F$. Usually, the set $F$ will not play a role, so we will omit the subscript $F$ and write $\osf$, with the implication that $\osf=\osf_F$ for some $F$. 
\medbreak

At this point, we recall the definition of the key sets that will be used in the definition of the algebra associated with a subshift.

\begin{definition}\label{Arapaima}
Let $\osf$ be a subshift for an alphabet $\alf$. Given $\alpha,\beta\in \lang$, define \[C(\alpha,\beta):=\{\beta x\in\osf:\alpha x\in\osf\}.\] In particular, denote $C(\eword,\beta)$ by $Z_{\beta}$ and call it a \emph{cylinder set}. Moreover, we denote $C(\alpha,\eword)$ by $F_{\alpha}$ and call it a \emph{follower set}. Notice that $\osf=C(\eword,\eword)$.
\end{definition}

In the following lemma, which will be used later, $gcd(m,n)$ denotes the greatest common divisor of two natural numbers $m$ and $n$.

\begin{lemma}\label{gcd1}  Let $\alf$ be an alphabet and $\alpha$ and $\beta$ be two words in $\alf^+$.
\begin{enumerate}[\hspace{.3cm} \rm (1)]
\item If $\alpha\beta=\beta\alpha$, then there exist $n,m\in \N$ and  $c\in \alf^+$ such that $\alpha = c^n$, $\beta=c^m$, and $|c|=gcd(|\alpha|,|\beta|)$. 
\item If there are $p,q\in \N$ such that $\alpha^p=\beta^q$, then there exists $c\in\alf^+$ with $|c|=gcd(|\alpha|, |\beta|)$, and $m,n\in \N$ such that   $\alpha=c^m$ and $\beta=c^n$.

\item Let $c_1,...,c_k$ be non-empty words and $\{p_1,....,p_k\}\subseteq \N$ be such that $c_i^{p_i}=c_j^{p_j}$ for every $i,j\in \{1,...,k\}$. Then, there exist $c\in\alf^+$ and $\{q_1,...,q_k\} \subseteq \N $ such that $c_i=c^{q_i}$ for each $i\in \{1,...,k\}$.
\end{enumerate}

\end{lemma}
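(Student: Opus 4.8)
The plan is to prove (1) first by a direct combinatorial argument on words, then deduce (2) from (1) by showing that the hypothesis $\alpha^p = \beta^q$ forces $\alpha\beta = \beta\alpha$, and finally obtain (3) from (2) by an induction on $k$.

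For part (1), assume $\alpha\beta = \beta\alpha$ with $\alpha,\beta \in \alf^+$. Without loss of generality suppose $|\alpha| \le |\beta|$. Since $\alpha\beta$ and $\beta\alpha$ agree as words and $\alpha\beta$ starts with $\alpha$, we get that $\beta$ starts with $\alpha$, say $\beta = \alpha\gamma$ for some $\gamma \in \alf^*$. Substituting into $\alpha\beta = \beta\alpha$ yields $\alpha\alpha\gamma = \alpha\gamma\alpha$, hence $\alpha\gamma = \gamma\alpha$ after cancelling the leading $\alpha$. If $\gamma = \eword$ then $\beta = \alpha$ and we take $c = \alpha$, $n = m = 1$. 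Otherwise $\gamma \in \alf^+$ with $|\gamma| < |\beta|$, and $(\alpha,\gamma)$ is a strictly smaller commuting pair, so by induction on $|\alpha| + |\beta|$ there is $c \in \alf^+$ with $\alpha = c^a$, $\gamma = c^b$, and $|c| = \gcd(|\alpha|,|\gamma|)$; then $\beta = \alpha\gamma = c^{a+b}$. It remains to check $|c| = \gcd(|\alpha|,|\beta|)$, which follows because $\gcd(|\alpha|,|\beta|) = \gcd(|\alpha|, |\alpha| + |\gamma|) = \gcd(|\alpha|,|\gamma|) = |c|$. The base case $|\alpha| = |\beta|$ forces $\alpha = \beta$ and is immediate. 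I expect this inductive word-combinatorics argument to be the main (though still elementary) obstacle; the standard alternative is to invoke the Fine–Wilf theorem, but the self-contained induction above is cleaner here.

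For part (2), suppose $\alpha^p = \beta^q$ with $\alpha,\beta \in \alf^+$ and $p,q \in \N$ (necessarily $p,q \ge 1$ since $\alpha,\beta$ are non-empty and the common word is non-empty). Then $\alpha\beta$ and $\beta\alpha$ are both initial segments of the infinite word $(\alpha^p)^\infty = (\beta^q)^\infty$ of the same length $|\alpha| + |\beta|$, hence $\alpha\beta = \beta\alpha$. Now apply part (1) to get $c \in \alf^+$ with $\alpha = c^m$, $\beta = c^n$ and $|c| = \gcd(|\alpha|,|\beta|)$, which is exactly the claim. (Alternatively, one observes directly that the length-$(|\alpha|+|\beta|)$ prefix of $(\alpha^p)^\infty$ read two ways gives the commutation relation.)

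For part (3), I proceed by induction on $k$, the case $k = 1$ being trivial with $c = c_1$. For the inductive step, apply part (2) to the pair $c_1, c_2$ using $c_1^{p_1} = c_2^{p_2}$ to obtain $d \in \alf^+$ and exponents with $c_1 = d^{r_1}$, $c_2 = d^{r_2}$. Replacing $c_1$ and $c_2$ by the single word $d$ (note $d^{r_1 p_1} = c_1^{p_1} = c_j^{p_j}$ for all $j$, so $d$ has a common power with every remaining $c_j$), we reduce to a family of $k-1$ words $d, c_3, \dots, c_k$ all of whose suitable powers coincide; by the induction hypothesis there is $c \in \alf^+$ with $d = c^{s}$ and $c_j = c^{q_j}$ for $j \ge 3$, and then $c_1 = d^{r_1} = c^{s r_1}$, $c_2 = d^{r_2} = c^{s r_2}$, finishing the proof. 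The only point requiring a line of care is verifying that $d$ does share a common power with each $c_j$, which is immediate from $d^{r_1 p_1} = c_1^{p_1} = c_j^{p_j}$.
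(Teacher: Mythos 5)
Your proof is correct, but for item (2) it takes a genuinely different route from the paper. The paper proves (2) independently of (1): assuming first $gcd(|\alpha|,|\beta|)=1$, it uses a Bezout identity $r|\beta|=s|\alpha|+1$ and reads letters of the periodic infinite word $(\alpha^p)^\infty$ to conclude that every letter of $\alpha$ equals the last letter of $\beta$, so both words are powers of a single letter; for general $gcd=k$ it re-blocks $\alpha$ and $\beta$ into length-$k$ blocks viewed as letters of the alphabet $\alf^k$ and reduces to the coprime case. You instead observe that $\alpha\beta$ and $\beta\alpha$ are both length-$(|\alpha|+|\beta|)$ prefixes of the common infinite word $\alpha^\infty=\beta^\infty$, hence equal, and then quote item (1); this is the classical Lyndon--Sch\"utzenberger shortcut, is shorter, and has the additional merit that the $gcd$ statement in (2) is inherited directly from (1). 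Your treatment of (1) is essentially the paper's Euclidean-type reduction, just organized as a clean induction on $|\alpha|+|\beta|$ with single-step cancellation instead of the paper's division with remainder $\alpha=\beta^n\beta(1,r)$; note that you also explicitly verify $|c|=gcd(|\alpha|,|\beta|)$, which the paper's write-up of (1) leaves implicit. Item (3) is the same induction as in the paper (merging two words into a common root $d$ and checking $d^{r_1p_1}=c_j^{p_j}$). One small caveat shared with the paper: in (2) you silently exclude the degenerate case $p=q=0$ (allowed since $0\in\N$ here), for which the hypothesis is vacuous and the conclusion can fail; your parenthetical ``necessarily $p,q\ge 1$'' presupposes the common word is non-empty, so it would be cleaner to state that assumption explicitly.
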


\begin{proof} 
We begin with the proof of (1). Without loss of generality, suppose that $|\alpha|>|\beta|$ (if $|\alpha|=|\beta|$ the result follows directly). From the commutativity hypothesis on $\alpha$ and $\beta$ we obtain that $\alpha \beta^n = \beta^n \alpha$ for every $n\in \N$. Then, there exist $n \geq 1$ and $0\leq r < |\beta|$ such that $\alpha = \beta^n \beta(1,r)$. If $r=0$ then we are done and so we assume that $0< r < |\beta|$. Substituting the expression we have obtained for $\alpha$ in the equation $\alpha \beta = \beta \alpha$, we conclude that $\beta(1,r)\beta = \beta \beta(1,r)$. If $|\beta(1,r)|$ divides $|\beta|$ then $\beta$ and $\alpha$ are powers of $\beta(1,r)$ and the result follows.  If $|\beta(1,r)|$ does not divide $|\beta|$ then, using that $\beta(1,r)\beta = \beta \beta(1,r)$ and proceeding as before, we obtain that there exist $n_1\geq 1$ and $0<r_1<r$ such that  $\beta = \beta(1,r)^{n_1}\beta(1,r_1)$. Substituting this expression for $\beta$ in the equality $\beta(1,r)\beta = \beta \beta(1,r)$, we obtain that $\beta(1,r)\beta(1,r_1)=\beta(1,r_1)\beta(1,r)$. Continuing as before, the process stops or we can write $\beta(1,r)= \beta(1,r_1)^{n_2} \beta(1,r_2)$, where $n_2\geq 1$ and $0\leq r_2 <r_1$. Clearly, this procedure eventually stops and we obtain $s\geq 1$ such that $\alpha$ and $\beta$ are powers of $\beta(1,s)$.

We now prove the second item. Assume, without loss of generality, that $|\beta|<|\alpha|$.
Suppose initially that $gcd(|\alpha|, |\beta|)=1$. From the Bezout identity, there exist $r,s\in \Z$ such that $r|\beta|=s|\alpha|+1$. Notice that if $r=0$ then $s=-1$ and $|\alpha|=1$, and if $s=0$ then  $r=1$ and $|\beta|=1$. In both cases, the result follows directly. We are left with the cases $r,s>0$ or $r,s<0$. Assume that $r,s>0$ (the other case is proved similarly). Let $x:=\alpha^p=\beta^q$ and define $y=x^\infty$. Then,  $y=\alpha\alpha\alpha\ldots = \beta\beta\beta\ldots$. Fix $j\in \{1,...,|\alpha|\}$.  Since $r|\beta|=s|\alpha|+1$, we have that $j r|\beta|=js|\alpha|+j$ and hence $y(jr|\beta|)=y(js|\alpha|+j)$. As $y=\beta^\infty$, we obtain that $y(jr|\beta|)=\beta(|\beta|)$, that is, $y(jr|\beta|)$ is equal to the last letter of $\beta$. Moreover, since $y=\alpha\alpha\alpha...$, we have that $y(js|\alpha|+j)$ is equal to $\alpha(j)$. Therefore, $\alpha(j)=\beta(|\beta|)$ for each $j\in \{1,...,|\alpha|\}$. We conclude that there exists $a\in\alf$ such that $\alpha=a^{m}$, for some $m\geq 1$, which implies that $\beta=a^n$ for some $n\geq 1$ (since $\alpha^p=\beta^q$). 

To finish the proof of the second item, let $k=gcd(|\alpha|, |\beta|)>1$. Write $\alpha=\gamma_1...\gamma_u$ and $\beta=\delta_1...\delta_v$, where $|\gamma_i|=|\delta_j|=k$, and notice that $gcd(u,v)=1$. In this case, $\gamma_i$ and $\delta_j$ can be seen as letters in the alphabet $\alf^k$, and the desired result follows from the previous paragraph of the proof.

Finally, we show the third item. We use an inductive argument on the number of words $c_i$. From the second item, the statement of the third item is true for two words. Suppose that it is also true for $k$ words, let $c_1,...,c_k,{c}_{k+1}$ be non-empty words and $\{p_1,...,p_k, p_{k+1}\}\subseteq \N$ be such that $c_i^{p_i}=c_j^{p_j}$, for each $i,j\in \{1,...,k+1\}$. Let $d\in \alf^+$ and $\{m_1,...,m_k\}\subseteq \N$ be such that $d^{m_i}=c_i$ for each $i\in \{1,...,k\}$. 
Then, $(d^{m_1})^{p_1}=c_1^{p_1}= c_{k+1}^{p_{k+1}}$. 
 From the second item there exist $c\in \alf^+$ and natural numbers $m,n$ such that $d=c^m$ and $c_{k+1}=c^n$. Therefore, for $i\in \{1,...,k\}$, we have that $c_i=d^{m_i}=(c^m)^{m_i}=c^{mm_i}$. Hence, the result follows by defining $q_i=mm_i$, for $i\in\{1,...,k\}$, and $q_{k+1}=n$. 
\end{proof}

\section{The unital subshift algebra}\label{unital}

Throughout the rest of the paper, $R$ denotes a commutative unital ring. In this section, we recall the definition of the unital subshift algebra, its description as a partial skew group ring, and prove properties regarding the multiplication between certain elements of the algebra.

\subsection{Definition and the partial skew group ring realization}\label{theoretic.pa}
Let $\TCB$ be the Boolean algebra of subsets of $\osf$ generated by all $C(\alpha,\beta)$ for $\alpha,\beta\in\lang$, that is, $\TCB$ is the collections of sets obtained from finite unions, finite intersections, and complements of the sets $C(\alpha,\beta)$. 


\begin{definition}\cite[Definition 3.3]{BGGV}\label{gelado} Let $\osf$ be a subshift.
The \emph{unital subshift algebra} $\ualgshift$ is the universal unital $R$-algebra  with generators $\{p_A: A\in\TCB\}$ and $\{s_a,s_a^*: a\in\alf\}$, subject to the relations:
\begin{enumerate}[(i)]
    \item $p_{\osf}=1$, $p_{A\cap B}=p_Ap_B$, $p_{A\cup B}=p_A+p_B-p_{A\cap B}$ and $p_{\emptyset}=0$, for every $A,B\in\TCB$;\vspace{.1cm}
    \item $s_as_a^*s_a=s_a$ and $s_a^*s_as_a^*=s_a^*$, for all $a\in\alf$;\vspace{.1cm}
    \item $s_{\beta}s^*_{\alpha}s_{\alpha}s^*_{\beta}=p_{C(\alpha,\beta)}$ for all $\alpha,\beta\in\lang$, where $s_{\eword}:=1$ and, for $\alpha=\alpha(1)\ldots\alpha(n)\in\lang$, $s_\alpha:=s_{\alpha(1)}\cdots s_{\alpha(n)}$ and $s_\alpha^*:=s_{\alpha(n)}^*\cdots s_{\alpha(1)}^*$.
\end{enumerate}
\end{definition}

According to \cite[Remark 3.4]{BGGV}, for $\alpha, \beta\in\lang$, we have that $s_\alpha^* s_\alpha = p_{C(\alpha,\omega)}=p_{F_\alpha}$ and $s_\beta s_\beta^* = p_{C(\omega,\beta)}= p_{Z_\beta}$.

The unital subshift algebra has a grading over the integers, which we recall below.

 \begin{proposition}\label{prop:grading} \cite[Proposition~3.9]{BGGV} Let $\osf$ be a subshift. The unital subshift algebra $\ualgshift$ is $\zn$-graded, with grading given by
     \[\ualgshift_n = \vecspan_R\{s_\alpha p_A s_\beta^* : \alpha,\beta \in \lang,\ A\in\TCB \ \mbox{and} \ |\alpha|-|\beta|=n\}.\]
 \end{proposition}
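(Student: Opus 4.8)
The plan is to verify the three requirements that make $\ualgshift=\bigoplus_{n\in\zn}\ualgshift_n$ a $\zn$-grading: that the subspaces $\ualgshift_n$ span $\ualgshift$, that $\ualgshift_m\ualgshift_n\subseteq\ualgshift_{m+n}$, and that the sum is direct. The first two reduce to a normal-form statement for words in the generators, while the last is extracted from the universal property of $\ualgshift$. I would begin by recording the elementary consequences of Definition~\ref{gelado}. Relation (i) gives $p_Ap_B=p_{A\cap B}$ and collapses a sum of $p$'s into a single $p$. Since distinct letters start distinct sequences, the cylinders $Z_a$ are pairwise disjoint; as $s_as_a^*=p_{Z_a}$ we get $(s_as_a^*)(s_bs_b^*)=p_{Z_a\cap Z_b}=0$ for $a\neq b$, hence, using relation (ii) twice, $s_a^*s_b=s_a^*(s_as_a^*)(s_bs_b^*)s_b=0$ for $a\neq b$, while $s_a^*s_a=p_{F_a}$. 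A direct computation with relation (iii) shows that for each $a\in\alf$ the element $s_a^*p_{C(\mu,\nu)}s_a$ is again of the form $p_C$ with $C\in\TCB$ (it is $p_{F_{\mu a}}$ if $\nu=\eword$, $p_{F_a\cap C(\mu,\nu')}$ if $\nu=a\nu'$, and $0$ otherwise); extending over Boolean combinations via relation (i), $s_a^*p_As_a=p_{\tau_a(A)}$ for a suitable $\tau_a(A)\in\TCB$, and, by the evident $*$-symmetry of the relations (which yields an $R$-algebra anti-automorphism of $\ualgshift$ fixing each $p_A$ and swapping $s_a\leftrightarrow s_a^*$), also $s_ap_As_a^*=p_{\rho_a(A)}$ with $\rho_a(A)\in\TCB$. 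Equivalently, $p_As_a=p_{A\cap Z_a}s_a=s_ap_{C}$ and $s_a^*p_A=p_{C'}s_a^*$ for appropriate $C,C'\in\TCB$.

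With these rewriting rules, spanning and multiplicativity follow together. The span $V:=\vecspan_R\{s_\alpha p_A s_\beta^*:\alpha,\beta\in\lang,\ A\in\TCB\}$ is $*$-invariant and contains $1=s_\eword p_\osf s_\eword^*$, so it suffices to show $V$ is closed under left multiplication by the generators, which makes it a left ideal containing $1$, hence all of $\ualgshift$. Indeed $s_a\cdot s_\alpha p_A s_\beta^*=s_{a\alpha}p_As_\beta^*\in V$; for $p_B$ one iterates $p_Cs_a=s_ap_{(\cdot)}$ to obtain $p_Bs_\alpha=s_\alpha p_{B'}$, so $p_B\cdot s_\alpha p_A s_\beta^*=s_\alpha p_{B'\cap A}s_\beta^*\in V$; and for $s_a^*$ one splits on the first letter of $\alpha$: if $\alpha=a\alpha'$ then $s_a^*s_\alpha p_As_\beta^*=p_{F_a}s_{\alpha'}p_As_\beta^*=s_{\alpha'}p_{(\cdot)\cap A}s_\beta^*\in V$, if $\alpha$ starts with $b\neq a$ the product is $0$, and if $\alpha=\eword$ then $s_a^*p_As_\beta^*=p_{C'}s_{\beta a}^*\in V$. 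Thus $V=\ualgshift=\sum_n\ualgshift_n$. The same moves applied to a product $(s_\alpha p_A s_\beta^*)(s_\gamma p_B s_\delta^*)$ with $|\alpha|-|\beta|=m$ and $|\gamma|-|\delta|=n$ settle multiplicativity: the middle factor $s_\beta^*s_\gamma$ vanishes unless one of $\beta,\gamma$ is an initial segment of the other, in which case it is $p_{F_\beta}s_{\gamma'}$ (when $\gamma=\beta\gamma'$) or $s_{\beta'}^*p_{F_\gamma}$ (when $\beta=\gamma\beta'$); pushing the $p$'s to the centre, the product rewrites as $s_{\alpha\gamma'}p_Cs_\delta^*$ or $s_\alpha p_{C'}s_{\delta\beta'}^*$, and a length count (for instance $|\alpha\gamma'|-|\delta|=|\alpha|+|\gamma|-|\beta|-|\delta|=m+n$) places this in $\ualgshift_{m+n}$. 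Hence $\ualgshift_m\ualgshift_n\subseteq\ualgshift_{m+n}$.

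For directness I would invoke the universal property once more. The assignment $s_a\mapsto s_a\otimes x$, $s_a^*\mapsto s_a^*\otimes x^{-1}$, $p_A\mapsto p_A\otimes 1$ respects relations (i)--(iii), since every relation is homogeneous of the same net degree on both sides (degree $0$ in (i) and (iii), degree $1$ in each identity of (ii)); it therefore extends to a unital $R$-algebra homomorphism $\Phi\colon\ualgshift\to\ualgshift\otimes_R R[x,x^{-1}]$ satisfying $\Phi(s_\alpha p_A s_\beta^*)=s_\alpha p_A s_\beta^*\otimes x^{|\alpha|-|\beta|}$, hence $\Phi(y)=y\otimes x^n$ for every $y\in\ualgshift_n$. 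As $R[x,x^{-1}]$ is a free $R$-module on $\{x^n:n\in\zn\}$, one has $\ualgshift\otimes_R R[x,x^{-1}]=\bigoplus_{n\in\zn}\ualgshift\otimes x^n$; so if $\sum_n y_n=0$ with $y_n\in\ualgshift_n$ and almost all $y_n=0$, applying $\Phi$ gives $\sum_n y_n\otimes x^n=0$, and comparing components in the direct sum forces $y_n=0$ for all $n$. Therefore $\ualgshift=\bigoplus_{n\in\zn}\ualgshift_n$, which is the asserted $\zn$-grading.

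The main obstacle is the normal-form step of the first two paragraphs: verifying, purely from relations (i)--(iii), that $s_a^*p_As_a$ and $s_ap_As_a^*$ always return elements of the form $p_C$ with $C\in\TCB$. This is precisely the combinatorial input — disjointness of cylinders and the behaviour of follower sets under the shift — that underlies the partial skew group ring realization and is exactly what makes $\{s_\alpha p_A s_\beta^*\}$ closed under multiplication. Once that is in place, multiplicativity and the $\Phi$-argument for directness are routine.
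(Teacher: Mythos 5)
The paper offers no proof of this proposition: it is imported verbatim from \cite{BGGV}, so there is no internal argument to compare yours against. Your proof is sound and follows the standard pattern for gradings of universal algebras: the defining relations (i)--(iii) are homogeneous for $\deg s_a=1$, $\deg s_a^*=-1$, $\deg p_A=0$; the monomials $s_\alpha p_A s_\beta^*$ span by a normal-form computation (the same computations that appear in this paper as Lemma~\ref{chucrute} and in \cite{BGGV}); and directness is extracted from homogeneity via the universal map into $\ualgshift\otimes_R R[x,x^{-1}]$, which is a correct and standard device (equivalent to observing that the ideal of relations in the graded free algebra is generated by homogeneous elements).

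Two points should be made explicit. First, in the spanning and multiplicativity steps the words $a\alpha$ and $\alpha\gamma'$ you produce need not lie in $\lang$; record that $s_w=0$ whenever $w\notin\lang$ has all letters in $\CL_1(\osf)$: writing $w=uaw'$ with $u\in\lang$ and $ua\notin\lang$, one gets $s_us_a=s_up_{F_u}p_{Z_a}s_a=s_up_{F_u\cap Z_a}s_a=0$ since $F_u\cap Z_a=\emptyset$, so such terms are either of the required form or zero. Second, your identities $s_as_a^*=p_{Z_a}$, $s_a^*s_a=p_{F_a}$ and $s_a^*s_b=0$ use relation (iii) for single letters and thus presuppose $a,b\in\CL_1(\osf)$; letters of $\alf$ not occurring in the language need a separate word (one wants $s_a=0$ for them, which does not follow from relation (iii) as quoted, restricted to $\alpha,\beta\in\lang$). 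This caveat is inherited from the statement itself and from the conventions of \cite{BGGV}, so it is a matter of bookkeeping rather than a flaw in your approach; with it noted, your argument is complete.
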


Next, we recall the description of the unital subshift algebra as a partial skew group ring, see \cite[$\S\,$5.1]{BGGV} for more details. This realization induces a grading of the algebra by the free group generated by the alphabet, which will be used throughout the rest of the paper (in particular to conclude that elements are non-zero). 

 Let $\CF(\osf,R)$ be the $R$-algebra of functions from $\osf$ to $R$ with pointwise operations, and $\udalgshift$ be the subalgebra of $\CF(\osf,R)$ generated by the characteristic functions of the sets $C(\alpha,\beta)$, where $\alpha,\beta\in\lang$. Let $\F$ be the free group generated by $\alf$, with the empty word $\eword$ as the identity of $\F$.

The construction begins with a partial action on the set level. Let $\tauh=\left(\{W_t\}_{t\in\F},\{\tauh_t\}_{t\in\F}  \right)$ be the partial action of $\F$ on $\osf$ such that, for every $\alpha,\beta\in\lang$ with $\beta\alpha^{-1}$ in the reduced form, $W_{\beta\alpha^{-1}} = C(\alpha,\beta)$ and
\[\tauh_{\alpha\beta^{-1}}(\beta x)=\alpha x,\]
for every $\beta x\in C(\alpha,\beta)$. Moreover, if $t\neq \alpha\beta^{-1}$ for every $\alpha,\beta\in\lang$, then $W_t=\emptyset$.

Next, an algebraic partial action is associated with $\tauh$, similarly to the dual action of a topological partial action. For $\alpha,\beta\in \lang$ such that $\alpha\beta^{-1}$ is in reduced form, let $1_{\alpha\beta^{-1}}$ denote the characteristic function of $W_{\alpha\beta^{-1}}=C(\beta,\alpha)$ and $D_{\alpha\beta^{-1}}$ the ideal of $\udalgshift$ generated by $1_{\alpha\beta^{-1}}$. Note that $D_{\alpha\beta^{-1}}$ is the ideal of functions in $\udalgshift$ that vanish on $C(\beta,\alpha)^c$ and has unit $1_{\alpha\beta^{-1}}$. Define $\tau_{\alpha\beta^{-1}}:D_{\beta\alpha^{-1}}\to D_{\alpha\beta^{-1}}$ by $\tau_{\alpha\beta^{-1}}(f)=f\circ\tauh_{\beta\alpha^{-1}}$, where $f\in D_{\beta\alpha^{-1}}$. Notice that $\tau_{\alpha\beta^{ -1}}(1_{\beta\alpha^{-1}})=1_{\alpha\beta^{-1}}$. If $t$ cannot be expressed in the form $\alpha\beta^{-1}$, we define $D_t=\{0\}$ and $\tau_t$ equals the zero function. Then, we have an algebraic partial action $\tau=\left( \{D_t\}_{t\in \F}, \{\tau_t\}_{t\in\F} \right)$  of $\F$ on $\udalgshift$.

The partial skew group ring associated with $\tau$ is defined as 
\[\udalgshift\rtimes_{\tau}\F= \bigoplus_{t\in\F} D_t=\left\{\sum_{} f_t\delta_t: t\in \F, f_t\in D_t \right\}, \]
where it is understood that $f_t$ is non-zero for finitely many terms and $\delta_t$ merely serves as a placeholder to remind us that $f_t\in D_t$. Multiplication is defined by 
\begin{equation}\label{eq:partial_multiplication}
    (f_s\delta_s)(g_t\delta_t) = \tau_s(\tau_s^{-1}(f_s)g_t )\delta_{st}.
\end{equation}

It is proved in \cite[Theorem 5.9]{BGGV} that $\udalgshift\rtimes_{\tau}\F$ is isomorphic to the unital subshift algebra, as stated below. 

\begin{theorem}\label{thm:set-theoretic-partial-action}
Let $\osf$ be a subshift. Then, $\ualgshift\cong\udalgshift\rtimes_{\tau}\F$ via an isomorphism $\Phi$ that sends $s_a$ to $1_a\delta_a$, $s^*_a$ to $1_{a^{-1}}\delta_{a^{-1}}$ for each $a \in \alf$, and $p_A$ to $1_A \delta_{\eword}$ for each $A\in \mathcal{U}$.
\end{theorem}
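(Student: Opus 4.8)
The plan is to construct $\Phi$ via the universal property of $\ualgshift$ and then show it is an isomorphism by exhibiting an explicit inverse; the genuine work lies in the bookkeeping that matches the twisted multiplication \eqref{eq:partial_multiplication} in the partial skew group ring with the defining relations (i)--(iii).

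First I would invoke the universal property of Definition~\ref{gelado}: to get an $R$-algebra homomorphism $\Phi\colon\ualgshift\to\udalgshift\rtimes_\tau\F$ it is enough to check that the prescribed images $1_a\delta_a$, $1_{a^{-1}}\delta_{a^{-1}}$ ($a\in\alf$) and $1_A\delta_\eword$ ($A\in\TCB$) satisfy (i)--(iii). Relation (i) is immediate, since $A\mapsto 1_A\delta_\eword$ is a homomorphism of $\TCB$ into the idempotents of $D_\eword$. For (ii) and (iii) one unwinds the partial action $\tau$ and uses \eqref{eq:partial_multiplication}: a short computation gives $(1_a\delta_a)(1_{a^{-1}}\delta_{a^{-1}})(1_a\delta_a)=1_a\delta_a$ together with its dual, and then, by induction on word length, $\Phi(s_\alpha)=1_{Z_\alpha}\delta_\alpha$ and $\Phi(s_\beta^*)=1_{F_\beta}\delta_{\beta^{-1}}$ for all $\alpha,\beta\in\lang$, whence $\Phi(s_\beta s_\alpha^* s_\alpha s_\beta^*)=1_{C(\alpha,\beta)}\delta_\eword=\Phi(p_{C(\alpha,\beta)})$, which is (iii). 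Multiplying out also yields the useful monomial formula $\Phi(s_\alpha p_A s_\beta^*)=1_{\alpha(A\cap F_\alpha\cap F_\beta)}\,\delta_{\alpha\beta^{-1}}$, where $\alpha(B):=\{\alpha x:x\in B\}$.

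Next I would build the inverse. Since $\udalgshift\rtimes_\tau\F=\bigoplus_t D_t$ and each $D_t$ is $R$-spanned by the characteristic functions $1_B$ with $B\in\TCB$, $B\subseteq W_t$, it suffices to define a linear map $\Psi$ on these. For $t=\alpha\beta^{-1}$ in reduced form, $W_t=C(\beta,\alpha)\subseteq Z_\alpha$, so each such $B$ has the form $B=\alpha B'$ with $B'=\{x:\alpha x\in B\}\in\TCB$, $B'\subseteq F_\alpha\cap F_\beta$ (using that $\TCB$ is closed under the shift maps $B\mapsto\alpha B$ and $B\mapsto\{x:\alpha x\in B\}$, a property already needed for $\tau$ to be well defined); put $\Psi(1_{\alpha B'}\delta_{\alpha\beta^{-1}}):=s_\alpha p_{B'}s_\beta^*$. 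This is well defined because a vanishing combination $\sum_i r_i 1_{B_i'}=0$ in $\udalgshift$ forces $\sum_i r_i p_{B_i'}=0$ in $\ualgshift$ --- refine the $B_i'$ into finitely many disjoint nonempty atoms of $\TCB$, on which the indicator functions are $R$-linearly independent, and use the additivity in (i). The monomial formula gives $\Phi\circ\Psi=\mathrm{id}$ on the spanning set, so $\Phi$ is surjective; and the image of $\Psi$ contains every $s_\alpha p_A s_\beta^*$ with $\alpha\beta^{-1}$ reduced, and these span $\ualgshift$ --- by Proposition~\ref{prop:grading} the monomials $s_\gamma p_A s_\delta^*$ span, and each can be rewritten with reduced exponent word using $s_c^* s_c=p_{F_c}$, the identity $s_c^* s_b=0$ for $b\neq c$ (a consequence of $s_c s_c^*=p_{Z_c}$, $s_b s_b^*=p_{Z_b}$ and $Z_c\cap Z_b=\emptyset$), and the fact that $s_c p_A s_c^*$ lies in the span of the $p_B$. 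Hence $\Psi$ is onto, so $\Psi\circ\Phi=\mathrm{id}$ as well and $\Phi$ is the claimed isomorphism with inverse $\Psi$. (Injectivity can alternatively be deduced from the $\F$-grading, which both algebras carry and $\Phi$ preserves: a nonzero $\F$-homogeneous element of degree $t=\alpha\beta^{-1}$ is of the form $s_\alpha p_A s_\beta^*$, and the monomial formula sends it to the nonzero element $1_{\alpha(A\cap F_\alpha\cap F_\beta)}\delta_t$.)

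The step I expect to be the main obstacle is the compatibility bookkeeping underlying those two computations. The product $(1_A\delta_s)(1_B\delta_t)$ in $\udalgshift\rtimes_\tau\F$ depends, via \eqref{eq:partial_multiplication}, on a case analysis of how the negative part of $s$ and the positive part of $t$ are related --- one a final, resp.\ initial, segment of the other, or neither --- and one has to verify that this matches, term by term, the reduction of $s_\beta^* s_\gamma$ inside $\ualgshift$ carried out by relations (ii) and (iii). This is precisely what makes both the relation checks for $\Phi$ and the identity $\Phi\circ\Psi=\mathrm{id}$ go through, and it is the same computation one would face if one preferred to prove $\Psi$ multiplicative directly. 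A subsidiary but genuinely used point is the closure of $\TCB$ under the maps $B\mapsto\alpha B$ and $B\mapsto\{x:\alpha x\in B\}$, which keeps all the sets arising in the argument inside $\TCB$.
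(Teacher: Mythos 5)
One point of order first: this paper does not actually prove Theorem~\ref{thm:set-theoretic-partial-action}; it imports it from \cite[Theorem 5.9]{BGGV}, so the only comparison available is with that reference rather than with anything in the present text. Your proposal is, as far as I can check, a correct self-contained proof along the standard lines: the universal property of Definition~\ref{gelado} produces $\Phi$ once relations (i)--(iii) are verified for $1_a\delta_a$, $1_{a^{-1}}\delta_{a^{-1}}$, $1_A\delta_\eword$ (your computations, e.g.\ $\Phi(s_\alpha)=1_{Z_\alpha}\delta_\alpha$, $\Phi(s_\beta^*)=1_{F_\beta}\delta_{\beta^{-1}}$ and the monomial formula $\Phi(s_\alpha p_A s_\beta^*)=1_{\alpha(A\cap F_\alpha\cap F_\beta)}\delta_{\alpha\beta^{-1}}$, are right), and the linear inverse $\Psi$ on the spanning set $\{1_B\delta_t: B\in\TCB,\ B\subseteq W_t\}$ is well defined exactly for the reason you give: disjointification into atoms plus the finite additivity encoded in relation (i). Your device of deducing $\Psi\circ\Phi=\operatorname{id}$ from $\Phi\circ\Psi=\operatorname{id}$ together with surjectivity of $\Psi$ is sound and neatly avoids having to check that $\Psi$ is multiplicative. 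Two caveats for a complete write-up. First, the facts you assert in passing --- closure of $\TCB$ under $A\mapsto r(A,\alpha)$ and $A\mapsto \alpha(A\cap F_\alpha)$, and the identity $s_\alpha p_A s_\alpha^*=p_{\alpha(A\cap F_\alpha)}$ used to reduce monomials to the case $\alpha\beta^{-1}$ reduced --- are genuine lemmas (they are established in \cite{BGGV}); they do hold, on generators via identities such as $\alpha\bigl(C(\mu,\nu)\cap F_\alpha\bigr)=C(\mu,\alpha\nu)$ and then by compatibility with the Boolean operations, but they must be written out. Second, your parenthetical alternative injectivity argument is slightly off as stated: an $\F$-homogeneous element of degree $t=\alpha\beta^{-1}$ is in general an $R$-linear combination $s_\alpha\bigl(\sum_i\lambda_i p_{B_i}\bigr)s_\beta^*$ rather than a single monomial $s_\alpha p_A s_\beta^*$, so that shortcut needs the same linear-independence input your main route already supplies; since it is only an aside, this does not affect the proof.
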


For future use, we also record the following lemma that is easy to prove.

\begin{lemma}\label{referee} Let $A$ be an algebra graded by a group $ G$, which has unit $e$. Suppose that $a_e\in A_e$ and $x=\sum_{g \in G} a_g \in A$. If $ a_g a_e \neq 0$ for some $g \in G$, then $x a_e \neq 0$.
\end{lemma}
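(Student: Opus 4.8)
The plan is to read off the conclusion directly from the grading. Write $x=\sum_{g\in G}a_g$ with $a_g\in A_g$ and only finitely many $a_g$ nonzero, so that $xa_e=\sum_{g\in G}a_ga_e$ is again a finite sum. The first observation I would make is that $a_e$ is homogeneous of degree $e$, hence for each $g$ we have $a_ga_e\in A_gA_e\subseteq A_{ge}=A_g$. Thus the sum $xa_e=\sum_{g\in G}a_ga_e$ expresses $xa_e$ as a sum of homogeneous elements of pairwise distinct degrees, and consequently $a_ga_e$ is exactly the degree-$g$ homogeneous component of $xa_e$.

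The second step is to invoke that $A=\bigoplus_{g\in G}A_g$ is a \emph{direct} sum: an element of $A$ is zero if and only if all of its homogeneous components are zero. Therefore, if $a_{g_0}a_e\neq 0$ for some $g_0\in G$, then $xa_e$ has a nonzero degree-$g_0$ component, and hence $xa_e\neq 0$. This is precisely the claim.

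I do not expect a genuine obstacle here; the only point requiring a moment's care is that right multiplication by the degree-$e$ element $a_e$ preserves degrees, which is what guarantees that the individual products $a_ga_e$ sit in different graded pieces and therefore cannot cancel each other out. Once that is noted, the argument is immediate from the definition of a group grading.
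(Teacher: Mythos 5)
Your argument is correct: since $a_e$ is homogeneous of degree $e$, each product $a_ga_e$ lies in $A_g$, so these are the homogeneous components of $xa_e$ in distinct degrees, and directness of the sum $A=\bigoplus_{g\in G}A_g$ gives the conclusion. The paper records this lemma without proof (``easy to prove''), and your argument is exactly the standard one it implicitly relies on.
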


We finish this subsection with another lemma regarding $ \udalgshift\rtimes_{\tau}\F$ that will be used in the proof of the Reduction Theorem (Theorem~\ref{reduction theorem}).

\begin{lemma}\label{claim1}
    Let $\tau=\left( \{D_t\}_{t\in \F}, \{\tau_t\}_{t\in\F} \right)$ be the partial action defined above, $t\in \F$, and $0\neq g\in D_t$. Then, there exists $a\in \alf$ such that $(g\delta_t)( 1_a\delta_a)\neq 0$.
\end{lemma}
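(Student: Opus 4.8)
The plan is to compute the product $(g\delta_t)(1_a\delta_a)$ directly via the multiplication rule \eqref{eq:partial_multiplication} and reduce everything to the statement that a certain nonzero function in $\udalgshift$ fails to vanish on a suitable cylinder set. First note that $0\neq g\in D_t$ forces $D_t\neq\{0\}$, so by construction $t=\alpha\beta^{-1}$ in reduced form for some $\alpha,\beta\in\lang$, $D_{t^{-1}}=D_{\beta\alpha^{-1}}$ is the ideal of $\udalgshift$ consisting of the functions in $\udalgshift$ vanishing on $C(\alpha,\beta)^c$, and $\tau_t^{-1}=\tau_{t^{-1}}\colon D_t\to D_{t^{-1}}$ is a bijection. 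Put $h:=\tau_t^{-1}(g)\in D_{t^{-1}}$; since $\tau_{t^{-1}}$ is injective and $g\neq 0$, we have $h\neq 0$.

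Next, for every $a\in\alf$ the element $1_a\delta_a$ is the image of $s_a$ under $\Phi$, where $1_a$ is the characteristic function of $Z_a=C(\eword,a)$, and \eqref{eq:partial_multiplication} gives $(g\delta_t)(1_a\delta_a)=\tau_t\big(\tau_t^{-1}(g)\,1_a\big)\,\delta_{ta}=\tau_t(h\,1_a)\,\delta_{ta}$. Because $D_{t^{-1}}$ is an ideal of $\udalgshift$ and $1_a\in\udalgshift$, the pointwise product $h\,1_a$ lies in $D_{t^{-1}}$, so $\tau_t$ is defined on it; and since $\tau_t$ is injective, it suffices to exhibit some $a\in\alf$ for which $h\,1_a\neq 0$, i.e. for which $h$ is not identically zero on $Z_a$.

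This last step is immediate. As $0\neq h\in\udalgshift\subseteq\CF(\osf,R)$, choose $y\in\osf$ with $h(y)\neq 0$ and set $a:=y(0)\in\alf$. Since $\osf$ is $\sigma$-invariant, $\sigma(y)\in\osf$, hence $y=a\,\sigma(y)\in C(\eword,a)=Z_a$; in particular $a\in\CL_1(\osf)\subseteq\lang$, so $1_a$ is the characteristic function of a nonempty set and $1_a(y)=1$. Therefore $(h\,1_a)(y)=h(y)\neq 0$, so $h\,1_a\neq 0$, and consequently $(g\delta_t)(1_a\delta_a)=\tau_t(h\,1_a)\,\delta_{ta}\neq 0$, which proves the lemma. (The nonvanishing of this element also shows $D_{ta}\neq\{0\}$, so that $ta$ is again of the form $\alpha'\beta'^{-1}$, but this is not needed.) There is no genuine obstacle in the argument; the only points requiring care are keeping track of which characteristic function $1_a$ corresponds to which cylinder set, using injectivity of $\tau_t$ to transport the nonvanishing through the partial skew group multiplication, and invoking invariance of $\osf$ to guarantee $y\in Z_a$.
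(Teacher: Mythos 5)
Your proof is correct and takes essentially the same approach as the paper: compute $(g\delta_t)(1_a\delta_a)=\tau_t(\tau_{t^{-1}}(g)1_a)\delta_{ta}$ and choose the letter $a$ from a point in the support of the relevant function. The only difference is organizational: by passing to $h=\tau_{t^{-1}}(g)$ and using injectivity of $\tau_t$, you handle the cases $\beta=\eword$ and $\beta\neq\eword$ uniformly (your $a$ coincides with the paper's choice in each case), whereas the paper argues the two cases separately.
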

\begin{proof}
Since $g$ is a non-zero element in $D_t$, by the definition of the partial action $\tau$, we have that $t=\alpha\beta^{-1}$ with $\alpha, \beta \in \lang$. 
Suppose first that $\beta =w$, that is, $t=\alpha$. Let $x\in W_t$ be such that $g(x)\neq 0$. Write $x=x(1)x(2)x(3)\ldots$ and let $a=x({|\alpha|+1})$. Then, 
$$\tau_t(\tau_{t^{-1}}(g)1_a))(x)=g(x)1_a\left(x(|\alpha|+1)x(|\alpha|+2)\ldots\right)=g(x).$$ Since $g(x)\neq 0$, we conclude that  
$\tau_t(\tau_{t^{-1}}(g)1_a))\neq 0$ and so 
$$g\delta_t 1_a\delta_a=\tau_t(\tau_{t^{-1}}(g)1_a)\delta_{ta}\neq 0.$$
Next, suppose that $\beta \neq w$, so that $t=\alpha\beta^{-1}$. Write $\beta=a\beta'$ with $a\in \lang$. In this case, notice that 
$$\tau_{t^{-1}}(g)1_a=\tau_{\beta \alpha^{-1}}(g)1_a=\tau_{\beta \alpha^{-1}}(g)$$ 
and therefore, since $g\neq 0$, we have that
$g\delta_t 1_a\delta_a=\tau_t(\tau_{t^{-1}}(g)1_a)\delta_{ta}=g\delta_{ta}\neq 0.$
\end{proof}




\subsection{Relative ranges and multiplicative properties.}

Recall from \cite{BGGV} that the \emph{relative range} of $(A,\alpha)$, with $A\in \TCB$ and $\alpha \in \lang$ is the set $$r(A,\alpha)=\{x\in \osf:\alpha x\in A\}.$$ In particular, notice that $r(\osf,\alpha)=\{x\in \osf:\alpha x\in \osf\}=F_{\alpha}$. 

The following auxiliary results regarding relative ranges will be useful in our work.

\begin{lemma}\label{relative range} Let $\osf$ be a subshift, $A\in \TCB$ be non-empty and $\alpha, \beta \in \lang$. Then,
\begin{enumerate}
[\hspace{.3cm} \rm (1)]
\item $r(F_\alpha, \beta)=F_{\alpha\beta}$, if $\alpha \beta \in \lang$. 
\item If $A\subseteq r(A,\alpha)$ then $\alpha^n \in \lang$ and $A\subseteq r(A,\alpha^n)$ for each $n\in \N$.
\end{enumerate}   
\end{lemma}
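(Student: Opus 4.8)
The plan is to unravel all the sets involved --- $F_\alpha$, $F_{\alpha\beta}$ and the relative range --- directly from their definitions. Once this is done, the first item reduces to the shift-invariance of $\osf$, and the second follows by an easy induction.

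For item (1), recall that $F_\gamma=C(\gamma,\eword)=\{x\in\osf:\gamma x\in\osf\}$ for $\gamma\in\lang$, and that $r(B,\beta)=\{x\in\osf:\beta x\in B\}$. Expanding, $\beta x\in F_\alpha$ means $\beta x\in\osf$ and $\alpha\beta x\in\osf$, so
\[ r(F_\alpha,\beta)=\{x\in\osf:\beta x\in\osf\ \text{and}\ \alpha\beta x\in\osf\},\qquad F_{\alpha\beta}=\{x\in\osf:\alpha\beta x\in\osf\}. \]
The inclusion $r(F_\alpha,\beta)\subseteq F_{\alpha\beta}$ is immediate. For the reverse inclusion, take $x\in F_{\alpha\beta}$; the only point to verify is that $\beta x\in\osf$, and this holds because $\beta x=\sigma^{|\alpha|}(\alpha\beta x)$ and $\osf$ is invariant for $\sigma$, so $\sigma^{|\alpha|}(\alpha\beta x)\in\osf$. (The hypothesis $\alpha\beta\in\lang$ is what makes $F_{\alpha\beta}$ a legitimate follower set; were it to fail, both sides would simply be empty.)

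For item (2), observe first that $A\subseteq r(A,\alpha)$ says precisely that $\alpha x\in A$ for every $x\in A$. Since $A\neq\emptyset$, pick $x_0\in A$; iterating the hypothesis gives $\alpha^n x_0\in A\subseteq\osf$ for every $n\in\N$, hence $\alpha^n$ is an initial segment of a sequence of $\osf$, i.e.\ $\alpha^n\in\CL_{n|\alpha|}(\osf)\subseteq\lang$. More generally, a straightforward induction on $n$ shows $\alpha^n y\in A$ for every $y\in A$: the case $n=1$ is the hypothesis, and if $\alpha^n y\in A$ for all $y\in A$, then for $y\in A$ we have $\alpha y\in A$, whence $\alpha^{n+1}y=\alpha^n(\alpha y)\in A$. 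This is exactly the assertion $A\subseteq r(A,\alpha^n)$, so both conclusions of (2) follow.

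The argument is elementary; the only step needing a little care is the use of shift-invariance of $\osf$ in item (1) to pass from $\alpha\beta x\in\osf$ to $\beta x\in\osf$, and the bookkeeping in item (2) that the single-step inclusion iterates to all powers of $\alpha$. I do not anticipate any serious obstacle.
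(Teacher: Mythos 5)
Your proof is correct and follows essentially the same route as the paper: unwinding the definitions of $F_\gamma$ and $r(\cdot,\cdot)$ for item (1) and iterating the hypothesis by induction for item (2). You are in fact a bit more careful than the paper, making explicit the shift-invariance step that identifies $r(F_\alpha,\beta)$ with $F_{\alpha\beta}$ and using the non-emptiness of $A$ to justify $\alpha^n\in\lang$, a point the paper's proof leaves implicit.
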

\begin{proof}
For the first item, note that
$$r(F_\alpha, \beta)=\{x\in \osf: \beta x\in F_\alpha\}=\{x\in \osf: \alpha \beta x\in \osf\}=F_{\alpha \beta}.$$
For the second item, let $x\in A$. By hypothesis, $x\in r(A,\alpha)$ and so $\alpha x\in A$. Since $A\subseteq r(A,\alpha)$ and $\alpha x\in A$, we obtain that $\alpha \alpha x \in A$. Thus, $x\in r(A, \alpha^2)$. Inductively, we obtain that $A\subseteq r(A,\alpha^n)$.
\end{proof}

Next, we explore properties related to the multiplication of certain elements in $\ualgshift$.

\begin{lemma}\label{chucrute} Let $\osf$ be a subshift, $a,b\in \alf$, and $\gamma,\alpha \in \lang$.
\begin{enumerate}[\hspace{.3cm} \rm (1)]
\item If $\beta:=b\gamma\in \lang$, then $s_\beta^*s_a=\delta_{b,a}s_\gamma^*p_{F_a}$.\vspace{.2cm}
\item If $A\in \TCB$, then $p_As_\alpha=s_\alpha p_{r(A,\alpha)}$ and $s_\alpha^*p_A=p_{r(A,\alpha)}s_\alpha^*$.
\item Let $x:=\sum\limits_{i=1}^m \lambda_i s_{\alpha_i} p_{A_i}$, where $\alpha_i\in \lang$, $\lambda_i \in R\setminus\{0\}$, and $A_i\in \TCB\setminus\{\emptyset\}$, for all $1\leq i\leq m$. Suppose that $x\neq 0$ and $\alpha_i\neq  \alpha_j$ for $i\neq j$. Then, there exist $A\in \TCB$ and a non-empty subset $J\subseteq \{1,...,m\}$ such that $A\subseteq A_j$, $A\subseteq F_{\alpha_j}$ for each $j\in J$, and  $$0\neq xp_A=\sum\limits_{i\in J} \lambda_i s_{\alpha_i}p_A.$$
\end{enumerate}
\end{lemma}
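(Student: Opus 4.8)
The plan is to reduce $x$ to a single "page" of the $\F$-grading, then apply the set-level partial action to locate a cylinder on which exactly a non-empty subset of the coefficient functions survive. First I would dispose of items (1) and (2), since they are direct computations: (1) follows from relation (ii)--(iii) of Definition~\ref{gelado} together with the fact that $s_b^* s_a = \delta_{b,a} p_{F_a}$ (which is the case $\gamma = \eword$, obtained from $s_a^* s_a s_a^* s_a = p_{F_a}$ and $s_b^* s_a = s_b^* s_b s_b^* s_a s_a^* s_a = \ldots = 0$ when $a \neq b$, using that $C(a,b) = \emptyset$ or directly that the ranges are orthogonal); and (2) is immediate from the partial skew group ring realization, since under $\Phi$ we have $p_A \mapsto 1_A\delta_\eword$ and $s_\alpha \mapsto 1_\alpha \delta_\alpha$, and $(1_A\delta_\eword)(1_\alpha\delta_\alpha) = \tau_\eword(1_A \cdot 1_\alpha)\delta_\alpha = 1_{A\cap W_\alpha}\delta_\alpha$, while $(1_\alpha\delta_\alpha)(1_{r(A,\alpha)}\delta_\eword) = \tau_\alpha(1_{\alpha^{-1}}\cdot\tau_{\alpha^{-1}}(1_{r(A,\alpha)}))\delta_\alpha$, and one checks these characteristic functions agree because $\alpha x \in A \iff x \in r(A,\alpha)$; likewise for the starred version.

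For item (3), I would transport everything to $\udalgshift\rtimes_\tau\F$ via $\Phi$. Each summand becomes $\lambda_i \, 1_{\alpha_i}\delta_{\alpha_i} \cdot 1_{A_i}\delta_\eword = \lambda_i\, g_i\,\delta_{\alpha_i}$, where $g_i$ is (the characteristic-function translate of) $1_{A_i}$ living in $D_{\alpha_i}$, supported on the set $\{\alpha_i y : y \in A_i \cap F_{\alpha_i}\} \subseteq Z_{\alpha_i}$. Since the $\alpha_i$ are pairwise distinct, these terms sit in distinct free-group grading components, so $x \neq 0$ forces $g_{i_0}\delta_{\alpha_{i_0}} \neq 0$ for at least one $i_0$; pick a point $z$ in the support of $g_{i_0}$ and write $z = \alpha_{i_0} w$ with $w \in A_{i_0} \cap F_{\alpha_{i_0}}$. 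Now I want a set $A \in \TCB$ with $z \in \{\alpha_{i_0} y : y \in A\}$, i.e. essentially $A$ a small neighbourhood of $w$ inside the Boolean algebra, chosen so that right-multiplication by $p_A = 1_A\delta_\eword$ kills precisely the $g_i$ not containing a suitable translate of $A$. Concretely, $(g_i\delta_{\alpha_i})(1_A\delta_\eword) = \tau_{\alpha_i}(1_{\alpha_i^{-1}}\tau_{\alpha_i^{-1}}(g_i\cdot(1_{\alpha_i} \text{-translate of } 1_A)))\delta_{\alpha_i}$, which unwinds to: the function supported on $\{\alpha_i y : y \in A_i \cap A \cap F_{\alpha_i}\}$. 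So after right-multiplying by $p_A$, the $i$-th term survives iff $A_i \cap A \cap F_{\alpha_i} \neq \emptyset$, in which case $s_{\alpha_i} p_{A_i} p_A = s_{\alpha_i} p_{A \cap A_i}$; and if moreover I arrange $A \subseteq A_i \cap F_{\alpha_i}$ for those $i$, it is exactly $s_{\alpha_i}p_A$.

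The remaining point — and the one requiring a little care — is the existence of a single $A\in\TCB$ that simultaneously lies inside $A_j \cap F_{\alpha_j}$ for every $j$ in the surviving set $J$, while being disjoint from $A_i \cap F_{\alpha_i}$ for $i \notin J$. Here I would use that $\TCB$ is a Boolean algebra: start from a point $w$ with $\alpha_{i_0}w$ in the support of $x$ (such a point exists since $x\neq 0$), let $J = \{\, i : w \in A_i \cap F_{\alpha_i}\,\}$, which is non-empty (it contains $i_0$), and set
\[
A \;=\; \bigcap_{j\in J}\bigl(A_j \cap F_{\alpha_j}\bigr)\;\setminus\;\bigcup_{i\notin J}\bigl(A_i\cap F_{\alpha_i}\bigr),
\]
a finite Boolean combination of the $A_i$'s and the follower sets $F_{\alpha_i}$ (all members of $\TCB$, using $s_{\alpha_i}^* s_{\alpha_i} = p_{F_{\alpha_i}}$), hence $A\in\TCB$. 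By construction $w \in A$, so $A\neq\emptyset$ and $A\subseteq A_j\cap F_{\alpha_j}$ for all $j\in J$; and for $i\notin J$ we have $A\cap A_i\cap F_{\alpha_i} = \emptyset$, so those terms die. Then $xp_A = \sum_{j\in J}\lambda_j s_{\alpha_j}p_{A_j}p_A = \sum_{j\in J}\lambda_j s_{\alpha_j}p_A$, using item (2) where convenient to move the $p_A$ past $s_{\alpha_j}$. Finally $xp_A\neq 0$: evaluating the corresponding element of $\udalgshift\rtimes_\tau\F$ at the grading component $\delta_{\alpha_{i_0}}$, the only contribution comes from $j = i_0$ (the $\alpha_j$ being distinct), namely $\lambda_{i_0}$ times the characteristic function supported on $\{\alpha_{i_0}y : y\in A\}$, which is nonzero since $w\in A$; by Lemma~\ref{referee} (or just directly, since distinct $\delta_{\alpha_j}$ components cannot cancel) $xp_A\neq 0$. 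The main obstacle is bookkeeping: making sure the relative-range manipulations in item (2) correctly identify which translate of $1_A$ appears after the partial-action multiplication, and that the chosen $A$ indeed lands inside $\TCB$ — both handled by the Boolean-algebra closure and the identities $p_{F_\alpha} = s_\alpha^* s_\alpha$, $p_{Z_\beta} = s_\beta s_\beta^*$ recorded after Definition~\ref{gelado}.
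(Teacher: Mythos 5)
Your argument for item (3) is correct, but it takes a genuinely different route from the paper's. The paper stays inside $\ualgshift$ and argues iteratively: it multiplies $x$ on the right successively by $p_{A_1},p_{A_2},\dots$ and then by $p_{F_{\alpha_1}},p_{F_{\alpha_2}},\dots$, invoking Theorem~\ref{thm:set-theoretic-partial-action} and Lemma~\ref{referee} at each stage to keep the product nonzero, discarding the terms that vanish, and letting $J$ be the set of surviving indices, with $A$ emerging as the final intersection. You instead pass to $\udalgshift\rtimes_\tau\F$ once and exploit that $\udalgshift$ consists of $R$-valued functions on $\osf$: since the $\alpha_i$ are distinct elements of $\F$, some component $\lambda_{i_0}g_{i_0}\delta_{\alpha_{i_0}}$ is nonzero, a point $\alpha_{i_0}w$ of its support gives $w\in A_{i_0}\cap F_{\alpha_{i_0}}$, and you take $J=\{i: w\in A_i\cap F_{\alpha_i}\}$ together with the one-shot Boolean combination $A=\bigcap_{j\in J}(A_j\cap F_{\alpha_j})\setminus\bigcup_{i\notin J}(A_i\cap F_{\alpha_i})\in\TCB$. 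The computations this requires do check out: $s_{\alpha_i}p_{A_i}p_A=s_{\alpha_i}p_{A_i\cap A\cap F_{\alpha_i}}$ vanishes for $i\notin J$ and equals $s_{\alpha_i}p_A$ for $j\in J$, and the $\alpha_{i_0}$-component of $\Phi(xp_A)$ takes the value $\lambda_{i_0}\neq 0$ at $\alpha_{i_0}w$, so $xp_A\neq 0$. What your approach buys is an explicit description of $J$ and of $A$ and the elimination of the paper's repeated reindexing and ``without loss of generality'' steps; what the paper's approach buys is a purely generator--relation mechanism (cut by a projection, keep nonzeroness via the grading) that is reused verbatim in Claims 3--6 of the Reduction Theorem. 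One small slip in your sketch of item (1): the orthogonality $s_b^*s_a=0$ for $a\neq b$ comes from $Z_a\cap Z_b=\emptyset$, not from $C(a,b)=\emptyset$, which can well be nonempty (e.g.\ in a full shift); your parenthetical alternative is the correct justification, and the paper in any case just quotes this item from the earlier reference.
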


\begin{proof} The first item follows from \cite[Proposition 3.6 (i)]{BGGV}. 

We will show that $p_A s_\alpha=s_\alpha p_{r(A,\alpha)}$ by induction on the length of $\alpha$. If $\alpha=\omega$, then $r(A,\omega)=A$ and hence, since $s_\omega=1$, the result follows.
Suppose that $p_As_\beta=s_\beta p_{r(A,\beta)}$ for each $\beta \in \lang$ with $|\beta|=n$. Let $\alpha\in \lang$ with $|\alpha|=n+1$, and write $\alpha=\beta a$ with $a\in \alf$. 
Then, $$p_As_\alpha=p_A s_\beta s_a=s_\beta p_{r(A,\beta)}s_a=s_\beta s_a p_{r(r(A,\beta),a)}=s_\alpha p_{r(r(A,\beta),a)}.$$  
Since $$r(r(A,\beta),a)=\{x\in \osf: ax\in r(A,\beta)\}=\{x\in \osf: \beta ax\in A\}=r(A,\beta a)=r(A,\alpha),$$
it follows that $s_\alpha p_{r(r(A,\beta),a)}=s_\alpha p_{r(A,\alpha)}$. Hence, $p_As_\alpha=s_\alpha p_{r(A,\alpha)}$. Similarly, one proves that $s_\alpha^*p_A=p_{r(A,\alpha)}s_\alpha^*$.

In order to prove the third item, first we show that there exists $B \in \TCB$, $B\subseteq A_1$, such that $$0\neq xp_B=\sum\limits_i \lambda_i s_{\alpha_i}p_B.$$ Suppose, without loss of generality, that $\lambda_i s_{\alpha_i} p_{A_i}\neq 0$
 for each $i$. We observe that $0\neq \lambda_1 s_{\alpha_1} p_{A_1}=\lambda_1 s_{\alpha_1} p_{A_1}p_{A_1}$. From Theorem~\ref{thm:set-theoretic-partial-action} and Lemma \ref{referee}, we have that $xp_{A_1}\neq 0$. Hence, $$0\neq xp_{A_1}=\lambda_1 s_{\alpha_1}p_{A_1}+\big(\sum \limits_{i=2}^m\lambda_i s_{\alpha_i}p_{A_i}\big)p_{A_1}=\lambda_1 s_{\alpha_1}p_{A_1}+\sum \limits_{i=2}^m\lambda_i s_{\alpha_i}p_{A_i\cap A_1}.$$
  If $\sum \limits_{i=2}^m\lambda_i s_{\alpha_i}p_{A_i\cap A_1}=0$, we are done. So, suppose that $\sum \limits_{i=2}^m\lambda_i s_{\alpha_i}p_{A_i\cap A_1}\neq 0$. We may also assume, without loss of generality, that $\lambda_i s_{\alpha_i}p_{A_i\cap A_1}\neq 0$ for each $2\leq i \leq m$. Then, 
 $$0\neq \lambda_2s_{\alpha_2}p_{A_2\cap A_1}=\lambda_2s_{\alpha_2}p_{A_2\cap A_1}p_{A_2}$$ and therefore, by Theorem \ref{thm:set-theoretic-partial-action} and Lemma \ref{referee},
 $$0\neq xp_{A_1} p_{A_2}=\lambda_1 s_{{\alpha}_1}p_{A_1\cap A_2}+ \lambda_2 s_{\alpha_2} p_{A_1\cap A_2}+\sum\limits_{i=3}^m\lambda_i p_{A_i\cap (A_1\cap A_2)}.$$ Notice that it could occur that $\lambda_1 s_{\alpha_1}p_{A_1\cap A_2}=0$, but since $\lambda_2 s_{\alpha_2}p_{A_1\cap A_2}\neq 0 $ we still have that $xp_{A_1}p_{A_2}\neq 0$ (using Theorem~\ref{thm:set-theoretic-partial-action}). Applying this argument repeatedly,  we obtain $B\in \TCB$, $B\subseteq A_1$, such that $$0 \neq xp_B = \sum\limits_{j=1}^m \lambda_j s_{\alpha_j}p_B,$$ where $\lambda_j s_{\alpha_j} p_B \neq 0$ for each $1\leq j\leq m$ (observe that this $m$ may not coincide with the $m$ in the sum above, but we rewrite it as $m$ again for notational convenience). 
 
 We are now in a position to prove the statement of the third item. Taking into account that $s_{\alpha_1}p_B=s_{\alpha_1}s_{\alpha_1}^*s_{\alpha_1}p_B=s_{\alpha_1}p_{B\cap F_{{\alpha}_1}}$,
we have that (again by Theorem \ref{thm:set-theoretic-partial-action} and Lemma \ref{referee}),
$$0\neq xp_Bp_{F_{\alpha_1}} = \big(\sum\limits_{j=1}^m \lambda_j s_{\alpha_j}p_{B}\big)p_{F_{\alpha_1}}= \sum\limits_{j=1}^m \lambda_j s_{\alpha_j}p_{B\cap F_{\alpha_1}}.$$ 
Suppose, without loss of generality, that $$\lambda_j s_{\alpha_j}p_{B\cap F_{\alpha_1}}\neq 0$$ for each $2\leq j \leq m$. Since $$s_{\alpha_2}p_{B\cap F_{\alpha_1}}=s_{\alpha_2}s_{\alpha_2}^*s_{\alpha_2}p_{B\cap F_{\alpha_1}}=s_{\alpha_2}p_{(B\cap F_{\alpha_1}\cap F_{\alpha_2})},$$ we obtain from Theorem \ref{thm:set-theoretic-partial-action} and Lemma \ref{referee} that
$$0\neq xp_Bp_{F_{\alpha_1}}p_{F_{\alpha_2}} = \big(\sum\limits_{j=1}^m\lambda_j s_{\alpha_j}p_{(B\cap F_{\alpha_1})}\big)p_{F_{\alpha_2}}=\sum\limits_{j=1}^m\lambda_j s_{\alpha_j}p_{(B\cap F_{\alpha_1}\cap F_{\alpha_2})}.$$
Applying repeatedly this argument we get $A \in \TCB$ and a non-empty subset $J\subseteq \{1,...,m\}$ (which is the set of the elements $j\in \{1,...,m\}$ such that $\lambda_j s_{\alpha_j} p_A\neq 0$), such that 
$$0\neq xp_A=\sum\limits_{i\in J} \lambda_i s_{\alpha_i}p_A,$$ and moreover $A\subseteq A_j$ and $A\subseteq F_{\alpha_j}$ for each $j\in J$.
\end{proof}

\section{The Reduction Theorem}\label{reduction section}

In this section, we will prove the main result of this paper, which is Theorem~\ref{reduction theorem} and that we will call the \emph{Reduction Theorem}. A key concept related to this theorem is that of a cycle within a subshift, which we define below. 

\begin{definition}
    Let $\osf$ be a subshift, $\alpha\in \lang\setminus \{w\}$, and $\emptyset \neq A\in \TCB$. The pair $(A,\alpha)$ will be called  a \emph{cycle} if $A\subseteq r(A,\alpha)$.
\end{definition}

Observe that, if $(A,\alpha)$ is a cycle, then $A=\{\alpha^\infty\}$ or $A$ contains infinite elements. Indeed, for each $x\in A$ we have that $\alpha^n x \in A$ for all $n\in \N$ (since $A\subseteq r(A,\alpha^n)$, see Lemma~\ref{relative range}). Suppose that $A$ contains an element $y$ such that $y\neq \alpha^\infty$. Then, since $\alpha^n y\in A$ for each $n\in \N$ and  $\alpha^n y\neq \alpha^m y$ for $n\neq m$, we conclude that $A$ contains infinite elements, as desired. This discussion motivates the following definition.

\begin{definition}\label{cyclexit}
Let $\osf$ be a subshift. We say that a cycle $(A,\alpha)$ has an \emph{exit} if $A\neq \{\alpha^\infty\}$. Otherwise, in case $A=\{\alpha^\infty\}$, we say that $(A,\alpha)$ is a \emph{cycle without exit}.
\end{definition}

\begin{remark}
   The reader should compare the definition above with Definition~7.11 and Proposition~7.12 (and its proof) in  \cite{GillesDanie}.
\end{remark}

\begin{definition}\label{mini}
Let $\osf$ be a subshift. We say that a cycle without exit $(A,\alpha)$ is \emph{minimal} if there is no element $\beta\in \lang$, with $1\leq |\beta|<|\alpha|$, such that $\alpha=\beta^k$ for some $k\geq 2$.
\end{definition}

The next result characterizes when a cycle without exit is minimal.

\begin{lemma}\label{minimalversion01} Let $(A,\alpha)$ be a cycle without exit. Then $(A,\alpha)$ is minimal if, and only if, there is no element $
\beta \in \lang$, with $1\leq |\beta|<|\alpha|$, such that $\beta \alpha^\infty=\alpha^\infty$.
\end{lemma}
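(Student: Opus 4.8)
The plan is to prove both implications via contraposition, reducing everything to part~(2) of Lemma~\ref{gcd1}. Throughout, recall that $(A,\alpha)$ a cycle without exit means $A=\{\alpha^\infty\}$, and that $A\subseteq r(A,\alpha)$ simply says $\alpha^\infty\in\osf$ (which is automatic once $A\neq\emptyset$) together with the consistency $\alpha\alpha^\infty=\alpha^\infty$, the latter being trivially true. So really the only structural fact I can use is $\alpha^\infty\in\osf$ and $\beta\in\lang$.

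\emph{($\Leftarrow$, contrapositive.)} Suppose $(A,\alpha)$ is not minimal, so there is $\beta\in\lang$ with $1\le|\beta|<|\alpha|$ and $\alpha=\beta^k$ for some $k\ge 2$. Then $\beta\alpha^\infty=\beta\beta^k\beta^k\cdots=\beta^\infty=(\beta^k)^\infty=\alpha^\infty$, and $\beta\in\lang$ since it is an initial (indeed a final) segment of $\alpha\in\lang$ and $\lang$ is closed under subwords. This $\beta$ witnesses the failure of the right-hand condition.

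\emph{($\Rightarrow$, contrapositive.)} Suppose there is $\beta\in\lang$ with $1\le|\beta|<|\alpha|$ and $\beta\alpha^\infty=\alpha^\infty$. I want to produce $c$ with $1\le|c|<|\alpha|$ and $\alpha=c^k$, $k\ge2$. From $\beta\alpha^\infty=\alpha^\infty$, comparing prefixes of length a common multiple of $|\alpha|$ and $|\beta|$ — concretely, looking at the prefix of $\alpha^\infty$ of length $|\alpha|\cdot|\beta|$ — one reads off $\beta^{|\alpha|}$ on one side (shifting by $\beta$ repeatedly) and $\alpha^{|\beta|}$ on the other, giving $\alpha^{|\beta|}=\beta^{|\alpha|}$ as finite words. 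Now apply Lemma~\ref{gcd1}(2): there is $c\in\alf^+$ with $|c|=\gcd(|\alpha|,|\beta|)$ and $\alpha=c^m$, $\beta=c^n$ for some $m,n\ge1$. Since $|\beta|<|\alpha|$ we get $n<m$, so $m\ge 2$; and $|c|=\gcd(|\alpha|,|\beta|)\le|\beta|<|\alpha|$, so $1\le|c|<|\alpha|$. Finally $c\in\lang$ as a subword of $\alpha\in\lang$. Thus $(A,\alpha)$ is not minimal.

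The only genuinely delicate point is the passage from the infinite-word identity $\beta\alpha^\infty=\alpha^\infty$ to the finite-word identity $\alpha^{|\beta|}=\beta^{|\alpha|}$; this is the ``main obstacle,'' though it is elementary. The cleanest way is: $\beta\alpha^\infty=\alpha^\infty$ forces, by induction, $\beta^j\alpha^\infty=\alpha^\infty$ for all $j\ge1$, so for $j=|\alpha|$ the word $\beta^{|\alpha|}$ is an initial segment of $\alpha^\infty$ of length $|\alpha||\beta|$; likewise $\alpha^{|\beta|}$ is the initial segment of $\alpha^\infty$ of that same length; two initial segments of the same word of the same length coincide, so $\beta^{|\alpha|}=\alpha^{|\beta|}$. (Alternatively, one can cite the pattern of reasoning already used in the proof of Lemma~\ref{gcd1}(1)–(2).) Everything else is bookkeeping about lengths and the closure of $\lang$ under subwords.
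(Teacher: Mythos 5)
Your proof is correct, and the nontrivial (forward) direction takes a genuinely different route from the paper's. The paper argues directly on infinite words: it picks $\gamma\in\lang\setminus\{\eword\}$ of minimal length with $\gamma\alpha^\infty=\alpha^\infty$, deduces $\gamma^\infty=\alpha^\infty$, writes $\alpha=\gamma^m\delta$ with $0\le|\delta|<|\gamma|$, and uses the minimality of $\gamma$ to force $\delta=\eword$, hence $\alpha=\gamma^m$ --- a descent argument that never invokes Lemma~\ref{gcd1}. You instead upgrade $\beta\alpha^\infty=\alpha^\infty$ to $\beta^j\alpha^\infty=\alpha^\infty$ for all $j\ge1$, compare prefixes of $\alpha^\infty$ of length $|\alpha|\,|\beta|$ to obtain the finite-word identity $\beta^{|\alpha|}=\alpha^{|\beta|}$, and then quote Lemma~\ref{gcd1}(2) to get $c$ with $\alpha=c^m$, $\beta=c^n$ and $|c|=\gcd(|\alpha|,|\beta|)$; the length bookkeeping ($n<m$, so $m\ge2$, and $1\le|c|\le|\beta|<|\alpha|$) together with the fact that $c$, being an initial segment of $\alpha\in\lang$, lies in $\lang$, completes the contrapositive. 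Both arguments are sound: yours is shorter because it offloads the word combinatorics onto the already-proved Fine--Wilf type statement in Lemma~\ref{gcd1}, whereas the paper's is self-contained within the lemma and avoids the (elementary but necessary) passage from the infinite-word identity to a finite power identity, which you correctly identified and justified as the delicate step. Your backward direction coincides with the paper's.
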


\begin{proof} Suppose that $(A,\alpha)$ is not minimal. Then, there exists $\beta\in \lang$ such that $1\leq |\beta|<|\alpha|$ and $\alpha=\beta^k$ for some $k\in \N$. This implies that $\alpha^\infty=\beta^\infty$ and hence $\beta \alpha^\infty=\beta \beta^\infty=\beta^\infty=\alpha^\infty$.

For the other implication, suppose that there exists $\beta\in \lang$, with $1\leq |\beta|<|\alpha|$, such that $\beta \alpha^\infty=\alpha^\infty$.  Let $\gamma \in \lang \setminus\{\omega\}$ be the element of minimal length such that $\gamma\alpha^\infty=\alpha^\infty$. Clearly, $1\leq |\gamma|<|\alpha|$. As $\gamma\alpha^\infty=\alpha^\infty$, we obtain that $\gamma^n \alpha^\infty=\alpha^\infty$ for each $n\in \N$, and consequently ${\gamma}^\infty=\alpha^\infty$. From this last equality, we get that $\alpha={\gamma}^m\delta$ for some $m\geq 2$ and $\delta\in \lang$ with $0\leq |\delta|<|\gamma|$. Suppose that $|\delta|>0$. Then, 
$${\gamma}^m\delta {\gamma}^\infty={\gamma}^m\delta \alpha^\infty=\alpha\alpha^\infty=\alpha^\infty={\gamma}^\infty$$ and therefore $\delta{\gamma}^\infty={\gamma}^\infty$. Hence, as ${\gamma}^\infty=\alpha^\infty$,  we conclude that $\delta\alpha^\infty=\alpha^\infty$, which is impossible by the minimality of $\gamma$. Therefore, $|\delta|=0$ and so $\alpha={\gamma}^m$, which means that $(A,\alpha)$ is not minimal.  
\end{proof}

\begin{remark}\label{minimal cycle} Let $(A,\alpha)$ be a cycle without exit and let $\beta\in \lang$ be the element of minimal length such that $\alpha=\beta^k$ for some $k\in \N$. Then, $(A, \beta)$ is a minimal cycle without exit. Indeed, if $(A,\beta)$ is not minimal then there exists $r\in \lang$, with $1\leq |r|<|\beta|$, such that $\beta=r^n$ for some $n\geq 2$. Thus $\alpha=r^{kn}$ which contradicts the choice of $\beta$.  \end{remark}

We now have all the ingredients necessary to prove the Reduction theorem. In particular, recall that there is an isomorphism $\Phi$ between $\ualgshift$ and $\udalgshift\rtimes_{\tau}\F$ (Theorem~\ref{thm:set-theoretic-partial-action}), which induces an $\F$-grading on $\udalgshift$. This grading will be used throughout the proof of the theorem, which we state below.

\begin{theorem}\label{reduction theorem} {\rm \textbf{(Reduction Theorem)}} Let $\osf$ be a subshift and $x\in \ualgshift$ be a non-zero element. Then, there exist $\mu,\nu\in \ualgshift$ such that  $\mu x\nu\neq 0$ and either:
\begin{enumerate}[\hspace{.3cm} \rm (1)]
    \item $\mu x\nu=\gamma p_D$ with $D\in \TCB$ and $\gamma \in R \setminus \{0\}$, or
    \item $\mu x\nu=\gamma_1 p_A +\sum\limits_{i=2}^k \gamma_i s_{\beta^{q_i}}p_A$, where $(A,\beta)$ is a minimal cycle without exit, $q_i\in \N\setminus \{0\}$, and $\gamma_i\in R \setminus \{0\}$ for $i=1,\ldots, k$.
\end{enumerate}

\end{theorem}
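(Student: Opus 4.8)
## Proof Strategy

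The plan is to work inside the partial skew group ring picture $\udalgshift\rtimes_\tau\F$ via the isomorphism $\Phi$ of Theorem~\ref{thm:set-theoretic-partial-action}, and to use both the $\zn$-grading (Proposition~\ref{prop:grading}) and the finer $\F$-grading. First I would observe that, by multiplying $x$ on the left and right by suitable $s_\gamma^*$ and $s_\gamma$, one may assume $x$ has nonzero component in $\ualgshift_0$; indeed, pick a homogeneous component $x_n\neq 0$ of $x$ with respect to the $\zn$-grading, write $x_n$ as a finite sum $\sum \lambda_i s_{\alpha_i} p_{A_i} s_{\beta_i}^*$, choose one term, and hit $x$ with $s_{\beta_i}^*$ on the right (or $s_{\alpha_i}$ on the left) to produce a new nonzero element whose degree-zero part is nonzero; more precisely one wants to arrange that after such operations the element lies in $\ualgshift_0 = \vecspan_R\{s_\alpha p_A s_\alpha^* : \alpha\in\lang,\ A\in\TCB\}$ up to lower-length corrections. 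The standard move (as in the Leavitt path algebra reduction theorem of \cite{socle}) is to reduce first to the case where $x$ itself is a finite sum $\sum_{i=1}^m \lambda_i s_{\alpha_i} p_{A_i} s_{\alpha_i}^*$ of degree-zero monomials, using Lemma~\ref{referee} to guarantee nonvanishing at each step.

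Next I would separate two cases according to the structure of the supporting words $\alpha_i$. Compress by a projection $p_A$ on both sides, using Lemma~\ref{chucrute}(3) (applied to $x s_{\alpha}$ type expressions after suitable manipulation) to arrange that all surviving $\alpha_i$ are comparable as initial segments, i.e. that they form a chain $\alpha_1 \preceq \alpha_2 \preceq \cdots$; this is achieved by repeatedly multiplying by $p_{Z_\gamma}$-type projections and discarding terms whose cylinder supports become disjoint. If after this process only one term survives, then $\mu x \nu$ is already of the form $\lambda s_\alpha p_A s_\alpha^*$, and a further multiplication by $s_\alpha^*$ on the left and $s_\alpha$ on the right gives $\lambda p_{r(A,\alpha)}$, landing in case (1). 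If several terms survive, with $\alpha_{i+1} = \alpha_i \mu_i$, then after conjugating by $s_{\alpha_1}^*$ and $s_{\alpha_1}$ one reduces to the situation $\gamma_1 p_A + \sum_{i\geq 2} \gamma_i s_{\beta_i} p_A s_{\beta_i}^*$ with the $\beta_i$ nontrivial and themselves forming a chain; compressing once more so that $A \subseteq r(A,\beta_i)$ for all $i$ exhibits each $(A,\beta_i)$ as a cycle.

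The cycle thus obtained must then be pushed to a cycle \emph{without exit}: here one uses that if $(A,\beta)$ is a cycle with an exit, $A \neq \{\beta^\infty\}$, then there is a point $x \in A$ and a letter disagreeing with $\beta^\infty$, so multiplying by an appropriate cylinder projection $p_{Z_\delta}$ shrinks $A$ to a set on which the cyclic term $s_\beta p_A s_\beta^*$ either vanishes or the compressed support is eventually forced to be $\{\beta^\infty\}$; iterating and using that the lengths $|\beta_i|$ are bounded, one arrives at $A = \{\beta^\infty\}$ for a single common word $\beta$, and then Lemma~\ref{gcd1}(3) forces all the $\beta_i$ to be powers of a single word $c$, so replacing $\beta$ by the minimal-length such $c$ and invoking Remark~\ref{minimal cycle} yields a minimal cycle without exit $(A,\beta)$. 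On $A = \{\beta^\infty\}$ one has $s_\beta p_A = s_\beta p_A s_\beta^* \cdot s_\beta$ and $p_{r(A,\beta)} = p_A$, so each surviving term $s_{\beta^{q_i}} p_A s_{\beta^{q_i}}^*$ can be rewritten, after a single further compression, as $\gamma_i s_{\beta^{q_i}} p_A$, giving case (2). The main obstacle I anticipate is the bookkeeping in the case analysis that forces the supporting words into a single chain of powers of one word: one must carefully track, through each multiplication by a projection, both that the element stays nonzero (via Lemma~\ref{referee} and the $\F$-grading, exactly as Lemma~\ref{claim1} is designed to supply) and that the combinatorial structure of the supports strictly simplifies, so that the process terminates — this is where Lemma~\ref{gcd1} and Lemma~\ref{minimalversion01} do the essential work.
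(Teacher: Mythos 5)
Your plan breaks at the very first reduction. You identify $\ualgshift_0$ with $\vecspan_R\{s_\alpha p_A s_\alpha^*\}$ and propose to reduce an arbitrary nonzero $x$ to a nonzero sum of such monomials. This conflates the degree-zero component (spanned by $s_\alpha p_A s_\beta^*$ with $|\alpha|=|\beta|$) with the diagonal subalgebra, and the proposed normal form is in fact unattainable in general: each $s_\alpha p_A s_\alpha^*$ is an idempotent lying in the commutative subalgebra corresponding to $D_\eword$ under Theorem~\ref{thm:set-theoretic-partial-action}, so any nonzero finite sum of them can be rewritten over pairwise disjoint sets and cut by a single projection down to $\gamma p_D$. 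If your Step 1 were achievable, case (2) of the theorem would never be needed — but it is needed: for $\osf=\{a^\infty\}$ the corner $p_A\ualgshift p_A$ is $R[x,x^{-1}]$ (Lemma~\ref{lem-Laurent-pol}), and an element such as $p_A+s_ap_A$ cannot be reduced to a nonzero multiple of an idempotent, since that would make $1+x$ invertible in $R[x,x^{-1}]$. The same confusion resurfaces at the end, where you claim each surviving term $s_{\beta^{q_i}}p_As_{\beta^{q_i}}^*$ "can be rewritten" as $s_{\beta^{q_i}}p_A$: on $A=\{\beta^\infty\}$ one has $s_{\beta^{q_i}}p_As_{\beta^{q_i}}^*=p_A$, a degree-zero projection, so your diagonal expression collapses to a multiple of $p_A$ and can never produce the genuinely positive-degree terms of case (2). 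The paper's route is different precisely here: it first kills all the starred parts by repeated right multiplication by letters (Claim 1, using Lemma~\ref{claim1} and the $\F$-grading), arriving at a star-free element $\sum_i\lambda_i s_{\alpha_i}p_{A_i}$, and only then forces the words into a chain and strips the common prefix.

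A second, smaller but still essential, gap: you invoke Lemma~\ref{gcd1}(3) to force all surviving words to be powers of one word, but you never create its hypotheses. In the paper this is the content of Claim 6: conjugating by a high power, $s_\alpha s_\alpha^*(s_\alpha^*)^n(\cdot)s_\alpha^n p_C$ with $|\alpha^{n-1}|<|c_m|\le|\alpha^n|$, forces every surviving word into the form $\beta_i\alpha^{k_i}$ with $\beta_i$ both an initial and a final segment of $\alpha$; combined with $A\subseteq Z_\alpha\cap Z_{\beta_i\alpha^{k_i}}$ this yields $\beta_i\alpha=\alpha\beta_i$, and only then do Lemma~\ref{gcd1}(1) and (3) apply. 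Your appeal to "the lengths $|\beta_i|$ are bounded" does not substitute for this mechanism. The exit-versus-no-exit dichotomy you sketch (cutting by a cylinder $p_{Z_{c^m\beta}}$ with $\beta\neq c$, as in Claim 8) and the use of Remark~\ref{minimal cycle} and Lemma~\ref{minimalversion01} do match the paper, but without a correct first normal form and the power-conjugation step the argument does not go through.
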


\begin{proof} 

Let $0\neq x\in \ualgshift$ and $\Phi$ be the isomorphism of Theorem~\ref{thm:set-theoretic-partial-action}.
We structure the proof in a few steps. The first one is the following.

\vspace{0.5pc}
\noindent {\bf Claim 1:} {\it There exists  $\mu\in \lang$ such that $xs_\mu\neq 0$ and $x s_\mu=\sum\limits\lambda_i s_{\alpha_i}p_{A_i}$, with $\lambda_i\in R$, $\alpha_i \in \lang\setminus \{w\}$, and $A_i\in \TCB$ for each $i$. Moreover, there exists $e\in \alf$ such that ${\alpha_i}(|\alpha_i|)=e$ for each $i$.}
\smallskip

By Proposition~\ref{prop:grading}, the element $x$ has the form $x=\sum\limits_i\gamma_i s_{a_i}p_{B_i}s_{b_i}^*$, with $\gamma_i\in R$, $a_i,b_i\in \lang$, and $B_i\in \TCB$ for each $i$. Regroup the terms of $x$ in the following way:
$$x=\sum\limits_{t\in \F}\sum\limits_{a_i{(b_i)}^{-1}=t}\gamma_i s_{a_i}p_{B_i}s_{b_i}^*.$$ For each $t\in \F$ such that $\sum\limits_{a_i{(b_i})^{-1}=t}\gamma_i s_{a_i}p_{B_i}s_{b_i}^*\neq 0$, let $h_t:=\sum\limits_{a_i{(b_i})^{-1}=t}\gamma_i s_{a_i}p_{B_i}s_{b_i}^*$. Thus, $x=\sum\limits_{t}h_t$, where each $h_t\neq 0$.  Notice that, for all $t \in \F$, there exists $0\neq g_t\in D_t$ such that $\Phi(h_t)=g_t\delta_t$. Therefore, 
$\Phi(x)=\Phi\big(\sum\limits_t h_t\big)=\sum\limits_t g_t\delta_t,$ with each $g_t\neq 0$. Fix a $t$, say $t_0$, and, following Lemma~\ref{claim1}, choose $e\in \alf$ such that $(g_{t_0}\delta_{t_0}) (1_e\delta_e)\neq 0$. This implies that $\left(\sum_t g_t \delta_t \right) 1_e \delta_e \neq 0$ and, since $\Phi$ is an isomorphism, we conclude that $0\neq \Phi^{-1}(\sum_t (g_t\delta_t) (1_e\delta_e))=xs_e$.
Thus, $$0\neq xs_e=\sum_i \gamma_i s_{a_i}p_{B_i}s_{b_i}^*s_e.$$ Now, for each $b_i$ with $|b_i|>0$, by Lemma~\ref{chucrute}(1), we have that either $s_{b_i}^*s_e=0$ or $b_i=ec_i$ for some $c_i \in \lang$ with $|c_i|< |b_i|$, so $s_{b_i}^*s_e=s_{c_i}^*p_{F_e}=p_{r(F_e,c_i)}s_{c_i}^*$. Hence, using Lemma \ref{chucrute} (2), we obtain that $xs_e$ has the form
$$xs_e=\sum\limits_i {\widetilde{\gamma_i}}s_{\widetilde{a_i}}p_{\widetilde{B_i}}s_{\widetilde{b_i}}^*,$$
 with $\widetilde{a_i}, \widetilde{b_i}\in \lang$ and 
 $|\widetilde{b_i}|<|b_i|$ for each $i$ with $|b_i|>0$. Applying the above argument repeatedly, we get Claim 1.

\smallskip
\noindent {\bf Claim 2:} {\it There exist $\beta,\mu \in \lang$ such that $s_\beta s_\beta^* x s_\mu\neq 0$ and 
$$s_\beta s_\beta^*xs_\mu=\sum\limits_i s_{\alpha_i}\sum\limits_j \lambda_j^ip_{B_j^i},$$ with $\alpha_i\in \lang$, $1\leq|\alpha_i|<|\alpha_{i+1}|$, and $\alpha_{i+1}=\alpha_i\beta_i$, for some $\beta_i\in \lang$, $\lambda_j^i\in R$ and $B_j^i\in \TCB$. Moreover, $\alpha_i(|\alpha_i|)=\alpha_j(|\alpha_j|)$ for each $i, j$.}
\smallskip

Let $\mu\in \lang$ be as in Claim 1. Then, $xs_\mu=\sum\limits\lambda_i s_{\alpha_i}p_{A_i}$,  with $\lambda_i\in R$, $\alpha_i\in \lang\setminus \{w\}$, and $A_i\in \TCB$ for each $i$. Reorganize this last sum and write it in the form   
$$xs_\mu=\sum\limits_i s_{\alpha_i}\sum\limits_j \lambda_j^ip_{A_j^i},$$ with $\alpha_i\neq \alpha_j$ for each $i\neq j$ and $0\neq s_{\alpha_i}\sum\limits_j \lambda_j^ip_{A_j^i}$ for each $i$.
 Let $\beta$ be one of the $\alpha_i$ of maximum length, say, $\beta=\alpha_k$. Since $s_\beta s_\beta^*s_\beta=s_\beta$, we obtain that $$s_\beta s_\beta^*s_{\alpha_k}\sum\limits_j \lambda_j^kp_{A_j^k}=s_{\alpha_k}\sum\limits_j \lambda_j^kp_{A_j^k} \neq 0,$$ and therefore $s_\beta s_\beta^*xs_\mu\neq 0$. 
 Fix an index $i\neq k$. If $\alpha_i$ is not an initial segment of $\beta$, that is  $\alpha_i\neq \beta(1)\cdots\beta(|\alpha_i|)$, then we have that $s_\beta^*s_{\alpha_i}=0$ and hence $s_\beta s_\beta^* s_{\alpha_i}=0$. If $\alpha_i$ is an initial segment of $\beta$, that is $\beta=\alpha_i\gamma_i$ with $\gamma_i\in \lang$, then $$s_\beta s_\beta^*s_{\alpha_i}=s_{\alpha_i}s_{\gamma_i} s_{\gamma_i}^*s_{\alpha_i}^*s_{\alpha_i}\stackrel{(\star)}{=}s_{\alpha_i}s_{\alpha_i}^*s_{\alpha_i}s_{\gamma_i} s_{\gamma_i}^*=s_{\alpha_i} p_{Z_{\gamma_i}}.$$
Observe that in $(\star)$ we used that $s_{\gamma_i} s_{\gamma_i}^*$ and $s_{\alpha_i}^*s_{\alpha_i}$ commute.
Now, define $B_j^i:=A_j^i\cap Z_{\gamma_i}$, and notice that 
 
 $$s_\beta s_\beta^* s_{\alpha_i}\sum\limits_j \lambda_j^i p_{A_j^i}=s_{\alpha_i} \sum\limits_j \lambda_j^i p_{B_j^i}.$$
Let $F$ be the set of all indices $i$ such that $$s_\beta s_\beta^* s_{\alpha_i}\sum\limits_j \lambda_j^ip_{B_j^i}\neq 0.$$ 
Then  $$s_\beta s_\beta^*xs_\mu=\sum\limits_{i\in F} s_{\alpha_i}\sum\limits_j \lambda_j^ip_{B_j^i}.$$
Since the only possible $i\in F$ are those such that $\alpha_i$ is an initial segment of $\beta$, we are done.

\smallskip
\noindent {\bf Claim 3:} {\it There exist $y,z\in \ualgshift$ and a non-empty index set $J$ such that $yxz\neq 0$ and 
$$yxz=\sum\limits_{i\in J}\gamma_i s_{\alpha_i}p_A,$$ where $\gamma_i \in R$, $\alpha_i \in \lang$ with $1\leq |\alpha_i|<|\alpha_{i+1}|$,  $\alpha_{i+1}=\alpha_i\beta_i$ for some $\beta_i\in \lang$ for each $i \in J$, and $A\in \TCB$ with $A\subseteq F_{\alpha_i}$ for each $i\in J$. Moreover, $\alpha_i(|\alpha_i|)=\alpha_j(|\alpha_j|)$ for each $i, j$.}
\smallskip

Let $\mu, \beta$ be as in Claim 2 and take $y:=s_\beta s_\beta^*$. Then,
$$yxs_\mu=\sum\limits_{i=1}^n s_{\alpha_i}\sum\limits_j \lambda_j^ip_{B_j^i},$$  where $0\neq s_{\alpha_i} \sum\limits_j \lambda_j^ip_{B_j^i}$ for every $i$. 
By \cite[Lemma 3.5]{BCGW21}, the sum $\sum \limits_j \lambda_j^1p_{B_j^1}$ can be written in the form $\sum \limits_j \lambda_j^1p_{B_j^i}=\sum\limits_k {\gamma_k^1}p_{C_k^1}$, where $C_k^1 \in \TCB$ are pairwise disjoint and $\gamma_k^1\in R$.
Since $s_{\alpha_1}\sum\limits_j \lambda_j^1p_{B_j^1}\neq 0$, we have that $s_{\alpha_1}\gamma_k^1p_{C_k^1}\neq 0$ for some $k$. Multiplying $yxs_\mu$ on the right side by this $p_{C_k^1}$ we get (from Theorem \ref{thm:set-theoretic-partial-action} and Lemma \ref{referee}) that,
$$0\neq yxs_\mu p_{C_k^1}=\gamma_k^1s_{\alpha_1}p_{C_k^1}+\sum\limits_{i=2}^n s_{\alpha_i}\sum\limits_j \lambda_j^ip_{B_j^i\cap C_k^1}.$$
Applying this argument repeatedly (proceeding in the next step with $\sum\limits_{i=2}^n s_{\alpha_i}\sum\limits_j \lambda_j^ip_{B_j^i\cap C_k^1}$) we obtain an element $C\in \TCB$ such that 
$$0\neq y xs_\mu p_C=\sum\limits_{i=1}^m\gamma_is_{\alpha_i}p_{C_i},$$ where $1\leq |\alpha_i|<|\alpha_{i+1}|$, $\alpha_i$ is an initial segment of $\alpha_{i+1}$ for each $i$, and $\alpha_i(|\alpha_i|)=\alpha_j(|\alpha_j|)$ for each $i,j$. In the sum $\sum\limits_{i=1}^m\gamma_is_{\alpha_i}p_{C_i}$ we may suppose, without loss of generality, that $\gamma_is_{\alpha_i}p_{C_i}\neq 0$ for each $i$.
Using Lemma~\ref{chucrute}(3), applied on the element $y x s_\mu p_C$, we obtain an element $A\in \TCB$ and a non-empty index set $J$ such that $$0\neq yxs_\mu p_Cp_A=\sum\limits_{i\in J}\gamma_i s_{\alpha_i}p_A.$$ Recall that $A \subseteq F_{\alpha_i}$ for all $i \in J$. Defining $z:=s_\mu p_C p_A$, we get the desired result.

\smallskip
\noindent {\bf Claim 4:} {\it There exist $y',z'\in \ualgshift$ such that $y'xz'\neq 0$ and either $y'xz'=\gamma_1p_D$ for some $\gamma_1 \in R\setminus\{0\}$, $D\in \TCB$, or 
$$y'xz'=\gamma_1 p_B+\sum\limits_{i=2}^k\gamma_i s_{b_i} p_B,$$ where $\gamma_i \in R$, $b_i\in \lang$ with $1\leq |b_i|<|b_{i+1}|$, $b_{i+1}=b_i\beta_i$ for some $\beta_i \in \lang$ for each $i$, and $B\in \TCB$ with $B\subseteq F_{b_i}\cap Z_{b_i}$, for each $i$. Moreover, $b_i(|b_i|)=b_j(|b_j|)$ for each $i, j$.}
\smallskip

Let $y,z$ be as in Claim 3 and write $yxz=\sum\limits_{i=1}^m\gamma_i s_{\alpha_i}p_A$. As a consequence of the choice of the index set $J$ in Claim 3, we have that $\gamma_is_{\alpha_i}p_{A}\neq 0$ for each $i\in \{1,...,m\}$. By Claim 3, for each $i\in \{2,...,m\}$, there exists $b_i\in \lang$ such that $\alpha_i=\alpha_1b_i$. Moreover, $A\subseteq F_{\alpha_i}$ for each $i\in \{1,...,m\}$ and, in particular, $A\subseteq F_{b_i}$ for each $i\in \{2,...,m\}$. Notice that $0\neq \gamma_1s_{\alpha_1}p_A=\gamma_1s_{\alpha_1}s_{\alpha_1}^*s_{\alpha_1}p_A$, which implies that $0\neq \gamma_1s_{\alpha_1}^*s_{\alpha_1}p_A=\gamma_1 p_{F_{\alpha_1}}p_A=\gamma_1 p_A$, where the last equality holds since $A\subseteq F_{\alpha_1}$. Therefore, multiplying $yxz$ on the left side by $s_{\alpha_1}^*$ and taking into account Theorem \ref{thm:set-theoretic-partial-action} and Lemma \ref{referee}, we get that
\begin{equation}\label{eq2} 0 \neq s_{\alpha_1}^*yxz=\gamma_1 p_A + \sum\limits_{i=2}^m \gamma_i p_{F_{\alpha_1}}s_{b_i}p_A.
\end{equation}
From Lemma~\ref{relative range} and Lemma~\ref{chucrute}(2), we obtain that
$$p_{F_{\alpha_1}}s_{b_i}p_A=s_{b_i}p_{r(F_{\alpha_1}, b_i)}p_A=s_{b_i}p_{F_{\alpha_1b_i}}p_A,$$
for each $i\in \{2,...,m\}$.
Since $A\subseteq F_{\alpha_i}= F_{\alpha_1b_i}$, we conclude that $s_{_ib}p_{F_{\alpha_1b_i}}p_A=s_{b_i}p_A.$ Therefore, (\ref{eq2}) has the form 
\begin{equation}\label{eq3}
0\neq s_{\alpha_1}^*yxz=\gamma_1 p_A+\sum\limits_{i=2}^m\gamma_i s_{b_i}p_A.
\end{equation}

Suppose that $A\cap Z_{b_i}=\emptyset$ for each $i\in \{2,...,m\}$. Then, 
$$p_As_{b_i}=p_A s_{b_i}s_{b_i}^*s_{b_i}=p_A p_{Z_{b_i}}s_{b_i}=0,$$ for every $i\in \{2,...,m\}$. Hence, multiplying (\ref{eq3}) on the left side by $p_A$ we obtain that
$$0\neq p_A s_{\alpha_1}^*yxz=\gamma_1 p_A,$$
and Claim 4 follows by defining $y':=p_As_{\alpha_1}y$, $z':=x$ and $D:=A$.

We are left with the case where $A\cap Z_{b_i}\neq \emptyset$ for some $i\in \{2,...,m\}$. Let $k$ be the greatest element of the set $ \{2,...,m\}$ such that $A\cap Z_{b_k}\neq \emptyset$. Then, multiplying (\ref{eq3}) on the right side by $p_{Z_{b_k}}$ we get (taking into account Theorem \ref{thm:set-theoretic-partial-action} and Lemma \ref{referee}):
$$0\neq s_{\alpha_1}^*yxzp_{Z_{b_k}}=\gamma_1 p_{(A\cap Z_{b_k})}+\sum\limits_{i=2}^k \gamma_i s_{b_i}p_{(A\cap Z_{b_k})}.$$ Define $y':=s_{\alpha_1}^*y$, $z':=zp_{Z_{b_k}}$ and $B:=A\cap Z_{b_k}$. Since $B\subseteq Z_{b_k}$ and $b_i$ is an initial segment of $b_k$, for each $i\in \{2,...,k\}$, we have that $B\subseteq Z_{b_i}$ for each $i\in \{2,...,k\}$. Moreover, for each $i$, we have that $A\subseteq F_{\alpha_i}$ and $B\subseteq F_{b_i}$ (as 
$\alpha_i=\alpha_1b_i$ and $B\subseteq A$). Thus, Claim 4 is proved.

\smallskip
\noindent {\bf Claim 5: }{\it Suppose that $$ 0 \neq \; x=\gamma_1 p_B+\sum\limits_{i=2}^k\gamma_i s_{b_i}p_B,$$ where $\gamma_i \in R \setminus \{0\}$, $b_i\in \lang$ with $1\leq |b_i|<|b_{i+1}|$, $b_{i+1}=b_i\beta_i$ for some $\beta_i\in \lang$ for each $i$, $B\in \TCB$ with $B\subseteq F_{b_i}\cap Z_{b_i}$ for each $i$, and $b_i(|b_i|)=b_j(|b_j|)$ for each $i, j$. Then, there exist $y',z'\in \ualgshift$ such that $0\neq y'xz'$ and either 
$y'xz'=\lambda_1 p_C$, or $$y'xz'=\lambda_1 p_C+\sum\limits_{i=2}^m\lambda_i s_{c_i}p_C,$$
where $\lambda_i \in R \setminus \{0\}$, $c_i\in \lang$ with $1\leq |c_i|<|c_{i+1}|$ and $c_{i+1}=c_i\alpha_i$ for some $\alpha_i\in \lang$ for each $i$, $C\in \TCB$ with $C\subseteq F_{c_i}\cap Z_{c_i}$ for each $i$, $c_i(|c_i|)=c_j(|c_j|)$ for each $i, j$, and $C\subseteq r(C,c_2)$.
}
\smallskip

If $B\subseteq r(B, b_2)$ then we take $C=B$ and the result follows. Therefore, we suppose that $B\nsubseteq r(B, b_2)$, so that $D=B\setminus r(B, b_2)\neq\emptyset$. For each $i\in \{2,...,k\}$, we write $b_i=b_2d_i$. Then, multiplying $x$ on the left side by $p_Ds_{b_2}^*$, we get 
\begin{align*}
p_D s_{b_2}^*x &=\gamma_1 p_D s_{b_2}^*p_B+\gamma_2 p_D s_{b_2}^* s_{b_2}p_B+\sum\limits_{i=3}^k \gamma_i p_D s_{b_2}^*s_{b_2} s_{d_i}p_B\\
&=\gamma_1 p_D p_{r(B,b_2)}s_{b_2}^*+\gamma_2 p_D + \sum\limits_{i=3}^k \gamma_i p_D p_{F_{b_2}}s_{d_i}p_B\\
&\stackrel{(\star)}{=}\gamma_2p_D+\sum\limits_{i=3}^k \gamma_i s_{d_i}p_{r(D,d_i)}p_B,
\end{align*}
 where  $(\star)$ follows since $D\cap r(B,b_2)=\emptyset$ and $D\subseteq B\subseteq F_{b_2}$.
Also, by Lemma~\ref{chucrute}(3), there exist an element $E\in \TCB$ and a non-empty index set $J$ such that 
$$0\neq p_D s_{b_2}^* xp_E=\gamma_2 p_E+\sum \limits_{i\in J} \gamma_i s_{d_i}p_E.$$
Repeating this process (in the next step we look at the set $E\setminus r(E,d_3)$), we get Claim~5.

\smallskip
\noindent {\bf Claim 6: }{\it Suppose that 

$$0 \neq \; x=\lambda_1 p_C+ \lambda_2 s_\alpha p_C+\sum\limits_{i=3}^m\lambda_i s_{c_i}p_C,$$
where $\lambda_i \in R \setminus\{0\}$, $c_i\in \lang$ with $1\leq |\alpha|<|c_i|<|c_{i+1}|$, $\alpha$ is an initial segment of each $c_i$, $c_i$ is an initial segment of $c_{i+1}$ for each $i$, $C\in \TCB$ with $C\subseteq F_{\alpha}\cap F_{c_i}$, $C\subseteq Z_\alpha\cap Z_{c_i}$ for each $i$, and $C\subseteq r(C,\alpha)$. Then, there exist a non-empty index set $J$ and $A\in \TCB$ such that
$$0\neq s_\alpha s_\alpha^* (s_\alpha^*)^n x s_\alpha^n p_Cp_A=\lambda_1 p_A+\lambda_2 s_\alpha  p_A+\sum\limits_{i\in J} \lambda_i s_{(\beta_i  \alpha^{k_i})}p_A,$$
where $0\leq |\beta_i|<|\alpha|$, $\beta_i$ is an initial and final segment of $\alpha$, $k_i \in \N$ and $A \subseteq F_{\alpha} \cap F_{\beta_i  \alpha^{k_i}}$ for each $i \in J$}.
\smallskip

We begin by describing the effect of the multiplication of the monomials that form $x$ by appropriate elements on the right and on the left. In the end, we will collect all the results obtained. 

Let $n\in \N$ be such that $|\alpha^{n-1}|<|c_m|\leq |\alpha^n|$.

Notice that $$(s_\alpha^*)^np_C s_\alpha^n=s_{\alpha^n}^* s_{\alpha^n}p_{r(C,\alpha^n)}=p_{F_{\alpha^n}}p_{r(C,\alpha^n)}=p_{r(C,\alpha^n)}.$$ 
As $C\subseteq r(C,\alpha)$, the second item of Lemma~\ref{relative range} implies that $C\subseteq r(C,\alpha^n)$. Therefore, using the $\F$-grading (Theorem \ref{thm:set-theoretic-partial-action} and Lemma \ref{referee}), we obtain that $(s_{\alpha}^*)^n xs_\alpha^n p_C \neq 0$.

Next, observe that 
$$(s_\alpha^*)^ns_\alpha p_C s_\alpha^n p_C=(s_\alpha^*)^ns_\alpha^{n+1}p_{r(C,\alpha^n)}p_C=p_{F_{\alpha^n}}s_{\alpha}p_C=s_\alpha p_{F_{\alpha^{n+1}}}p_C=s_\alpha p_C,$$
where the last equality follows from the second item of Lemma~\ref{relative range}.

Proceeding, we compute $(s_\alpha ^*)^ns_{c_i}p_C s_{\alpha}^n$ for $i\geq 3$. Note that $(s_\alpha ^*)^ns_{c_i}=0$, unless $c_i$ has the form $c_i=\alpha^k t_i$, with $n \geq k$, $0\leq |t_i|<|\alpha|$, and $t_i$ is an initial segment of $\alpha$ for each $i$. 
If $|t_i|=0$ then $c_i=\alpha^k$ and we have that
\begin{align*} (s_\alpha^*)^ns_\alpha^kp_C s_\alpha^n &=(s_\alpha^*)^{n-k}p_{F_{\alpha}^k}p_Cs_{\alpha^n}=(s_\alpha^*)^{n-k}p_Cs_{\alpha^n}\\
& =(s_\alpha^*)^{n-k}s_{\alpha^n}p_{r(C,\alpha^n)} = p_{F_{\alpha^{n-k}}}s_{\alpha}^kp_{r(C,\alpha^n)} \\
&= s_\alpha^k p_{F_{\alpha^n}}p_{r(C,\alpha^n)}=s_\alpha^k p_{r(C,\alpha^n)}.
\end{align*}
Suppose now that $|t_i|>0$. Then we have that
\begin{align*}
 (s_\alpha^*)^ns_{c_i}p_Cs_\alpha^n &=(s_\alpha^*)^{n-k}(s_\alpha^*)^k s_\alpha^k s_{t_i} p_C s_\alpha^n=(s_\alpha^*)^{n-k} p_{F_{\alpha^k}} s_{t_i} p_C s_\alpha^n\\
 &=(s_\alpha^*)^{n-k} s_{t_i} p_{F_{\alpha^k t}} p_C s_\alpha^n=(s_\alpha^*)^{n-k} s_{t_i} p_C s_\alpha^n\\
 &=(s_\alpha^*)^{n-k} s_{t_i}  s_\alpha^n p_{r(C, \alpha^n)}
\end{align*}
Notice that $(s_{\alpha}^*)^{n-k} s_{t_i} s_\alpha^{n-k}=0$, unless $\alpha^{n-k}$ is an initial segment of $t_i\alpha^{n-k}$, in which case we have that $$t_i \alpha^{n-k}=\alpha^{n-k} \beta,$$ where $\beta=\alpha(j_i)\cdots\alpha(|\alpha|)$. Thus,
$$(s_{\alpha}^*)^{n-k} s_{t_i} s_\alpha^{n-k}=
(s_{\alpha}^*)^{n-k} s_\alpha^{n-k}s_{\beta}=
 p_{F_{\alpha^{n-k}}} s_{\beta},$$
and consequently
\begin{align*}
(s_\alpha^*)^ns_{c_i}p_Cs_\alpha^n p_C&=p_{F_{\alpha^{n-k}}}s_{\beta}s_{\alpha}^kp_{r(C,\alpha^n)}p_C  \\
&=p_{F_{\alpha^{n-k}}}s_{\beta}s_{\alpha}^k p_C\\
&=s_{\beta \alpha^k}p_{F_{(\alpha^{n-k}\beta \alpha^k)}}p_C\\
&=s_{\beta \alpha^k}p_{F_{(t_i \alpha^n)}}p_C.
\end{align*}
We observe that $|\beta|=|t_i|<|\alpha|$, $\beta$ is a final segment of $\alpha$, and $t_i$ is an initial segment of $\alpha$.  
Therefore, $s_\alpha^* s_\beta=0$ unless $t_i=\beta$. In this case, we write $\alpha=\beta r$ and get 
\begin{align*}
s_\alpha s_\alpha^* s_{\beta \alpha^k}&=s_\beta s_r s_r^* s_\beta^* s_\beta s_{\alpha^k}=s_\beta p_{Z_r} p_{F_{\beta}} s_{\alpha^k}=s_\beta s_{\alpha^k}p_{r(Z_r,\alpha^k)}p_{F_{\beta\alpha^k}}.
\end{align*}
Hence, multiplying $(s_\alpha^*)^ns_{c_i}p_Cs_\alpha^n p_C$ on the left side by $s_\alpha s_\alpha^*$ we obtain that
$$s_\alpha s_\alpha^*(s_\alpha^*)^ns_{c_i}p_Cs_\alpha^n p_C= s_{\beta\alpha^k}p_{r(Z_r,\alpha^k)}p_{F_{(\beta a^n)}}p_C$$ where $\beta$ is an initial and final segment of $\alpha$, and we have used that $F_{\beta \alpha^n}\subseteq F_{\beta \alpha^k}$ (since $n\geq k$ and $\beta$ is a final segment of $\alpha$). 

Since $k$ (resp. $\beta$) above depends on $i$, we denote the $k$ (resp. $\beta$) corresponding to the monomial with $s_{c_i}$ by $k_i$ (resp. $\beta_i$).

Now, multiplying $x$ on the left side by $s_\alpha s_\alpha^* (s_\alpha^*)^n$, on the right side by $s_\alpha^n p_C$, and using the $\F$-grading, we get that
$$0\neq y= s_\alpha s_\alpha^* (s_\alpha^*)^n x s_\alpha^n p_C=\lambda_1 p_C+\lambda_2s_\alpha p_C+\sum\limits_{i \geq 3} \gamma_i s_{(\beta_i \alpha^{k_i})}p_{r(Z_{r_i}, \alpha^{k_i})}p_{F_{(\beta_i \alpha^n)}}p_C.$$
To conclude, we use the third item of Lemma~\ref{chucrute} to obtain $A\in \TCB$ and a non-empty index set $J$ such that 
$$0\neq y p_A=\lambda_1 p_A+\lambda_2 s_\alpha p_A+ \sum\limits_{i \in J} \gamma_i s_{(\beta_i \alpha^{k_i})}p_A,$$ with $A \subseteq F_{\alpha} \cap F_{\beta_i  \alpha^{k_i}}$ for each $i \in J$ and Claim 6 is proved.

\smallskip
\noindent {\bf Claim 7: }
{\it Let $x \in \ualgshift$ be a non-zero element. There exist $x',y'\in \ualgshift$ such that $0\neq x'xy'$ and either $x'xy'=\gamma p_D$ with $\gamma \in R$ and $D\in \TCB$, or 
$$x'xy'=\gamma_1 p_A +\sum\limits_{i=2}^k \gamma_i s_{c^{p_i}}p_A,$$
where $c\in \lang$,  $0\neq \gamma_i\in R$ for each $i\in \{1,...,k\}$, $1\leq p_i <p_{i+1}$, $A\subseteq F_{c^{p_i}}\cap Z_{c^{p_i}}$, and  $A\subseteq r(A,c^{p_2})$.}  
\smallskip

Using Claim 4, and applying repeatedly Claim~5 and Claim~6 (if necessary), we obtain $x',y'\in \ualgshift$ such that $0\neq x'xy'$ and either $x'xy'=\gamma p_D$ with $\gamma \in R$ and $D\in \TCB$ (in which case we are done), or  
$$x'xy'=\gamma_1 p_A +\gamma_2 s_\alpha p_A +\sum\limits_{i=3}^k \gamma_i s_{\beta_i \alpha^{k_i}}p_A,$$ where $\gamma_i\in R$ and $\gamma_1, \gamma_2\neq 0$, $0\leq |\beta_i|<|\alpha|$, $\beta_i$ is an initial and final segment of $\alpha$, $k_i> 0$, $A\subseteq F_{\alpha}\cap  F_{\beta_i\alpha^{k_i}}$, $A\subseteq Z_{\alpha}\cap Z_{\beta_i\alpha^{k_i}}$  and  $A\subseteq r(A,\alpha)$.

Let $z\in A\subseteq Z_\alpha\cap Z_{\beta_i\alpha^{k_i}}$. Then, an initial segment of $z$ is $\alpha$ and another is $\beta_i \alpha$. Hence, $\beta_i \alpha=\alpha \gamma$ for some block $\gamma$. Notice that $|\gamma|=|\beta_i|$ and that $\gamma$ is a final segment of $\alpha$. Since $\beta_i$ is also a final segment of $\alpha$ then $\gamma=\beta_i$. Therefore, $\beta_i \alpha=\alpha\beta_i$ and, from the first item of Lemma~\ref{gcd1}, there are natural numbers $m_i,n_i$ such that $\alpha=c_i^{n_i}$ and $\beta_i=c_i^{m_i}$.
In particular, $\alpha={c_3}^{n_3}$ and  $\beta_i\alpha^{k_i}=c_i^{m_i+k_in_i}$ for each $i\in \{3,...,k\}$. Therefore, 
$$x'xy'=\gamma_1 p_A +\gamma_2 s_{c_3^{n_3}} p_A +\sum\limits_{i=3}^k \gamma_i s_{c_i^{m_i+k_in_i}}p_A.$$
Using that $\alpha=c_i^{n_i}=c_j^{n_j}$ for each $i,j\in \{3,...,k\}$ and the third item of Lemma~\ref{gcd1}, we obtain a block $c$ and natural numbers $q_3,...,q_k$ 
such that $c_i=c^{q_i}$ for each $i\in \{3,...,k\}$. Therefore, 
$$x'xy'=\gamma_1 p_A +\gamma_2 s_{c^{q_3n_3}} p_A +\sum\limits_{i=3}^k \gamma_i s_{c^{q_i(m_i+k_in_i)}}p_A.$$  Define $p_2:=q_3n_3$ and $p_i:=q_i(m_i+k_in_i)$, for $i\in \{3,...,k\}$. Then, 
$$x'xy'=\gamma_1 p_A +\sum\limits_{i=2}^k \gamma_i s_{c^{p_i}}p_A.$$
Observe that $c^{p_2}=c^{q_3n_3}=(c^{q_3})^{n_3}=c_3^{n_3}=\alpha$, which implies that $A\subseteq r(A,\alpha)=r(A,c^{p_2})$, $A\subseteq F_{\alpha}=F_{c^{p_2}}$, and $A\subseteq Z_{\alpha}=Z_{c^{p_2}}$. Moreover, for $i\in \{3,...,k\}$, we have that $$\beta_i\alpha^{k_i}=c_i^{m_i+k_in_i}=(c^{q_i})^{m_i+k_in_i}=c^{p_i},$$ so that $A\subseteq F_{\beta_i\alpha^{k_i}}=F_{c^{p_i}}$ and $A\subseteq Z_{\beta_i\alpha^{k_i}}=Z_{c^{p_i}}$.

\smallskip
\noindent {\bf Claim 8: }
{\it Let $c\in \lang$ and $A\in \TCB$. Suppose that $$0\neq x =\gamma_1p_A+\sum\limits_{i=2}^k\gamma_i s_{c^{p_i}}p_A,$$ with $A\subseteq Z_{c^{p_k}}$, and $0\neq \gamma_i\in R$. If $A\nsubseteq \bigcap \limits_{n=1}^\infty Z_{c^n}$, then there exists $x''\in \ualgshift$ such that $0\neq x''x=\gamma_1 p_D$, where $D\in \TCB$.}
\smallskip

Since $A\nsubseteq \bigcap\limits_{n=1}^\infty Z_{c^n}$, there exists $n\in \N$ such that $A\nsubseteq Z_{c^n}$. Notice that $n>p_k$, because $A\subseteq Z_{c^{p_k}}$. Let $m$ be the least of the $i\in \N$ such that $A\nsubseteq Z_{c^{i+1}}$. Observe that $m\geq p_k$, since $A\subseteq Z_{c^j}$ for each $1\leq j\leq p_k$, and $A\subseteq Z_{c^m}$ but $A\nsubseteq Z_{c^{m+1}}$. Let $z\in A$ be such that $z\notin Z_{c^{m+1}}$. Then $z=c^m\beta z'$, where $|\beta|=|c|$ but $\beta\neq c$ (since $z\in Z_{c^{m}}$ and $z\notin Z_{c^{m+1}}$).
Hence, $s_{c^m\beta} s_{c^m\beta}^*p_A=p_{Z_{c^m\beta}}p_A$
 and $p_{Z_{c^m\beta}}p_A\neq 0$, since $z\in A\cap Z_{c^m\beta}$.
Moreover, for each $i\in \{2,...,k\}$, we have that
\begin{align*}
s_{c^m\beta}^*s_{c^{p_i}}p_A&=s_{c^{m-p_i}\beta}^*p_{F_{c^{p_i}}}p_A\\
&=s_{c^{m-p_i}\beta}^*s_{c^{m-p_i}\beta}s_{c^{m-p_i}\beta}^*p_Ap_{F_{c^{p_i}}}\\
&=s_{c^{m-p_i}\beta}^*p_{Z_{c^{m-p_i}\beta}}p_Ap_{F_{c^{p_i}}}.
\end{align*}
Since $A\subseteq Z_m$, $p_i\geq 1$, $|\beta|=|c|$, and $\beta \neq c$, we obtain that $A\cap Z_{c^{m-p_i}\beta}=\emptyset$. Thus, for each $i\in \{2,...,k\}$, we have that
$s_{c^m\beta}^* s_{c^{p_i}}p_A=0$. Consequently, $s_{c^m\beta}s_{c^m\beta}^* x=\gamma_1 p_D\neq 0,$ where $D=Z_{c^m\beta}\cap A$, and Claim~8 is proved. 

\smallskip

\noindent{\bf Proof of the Theorem:}

Let $0\neq x\in \ualgshift$. By Claim 7, there exist $x',y'\in \ualgshift$ such that $0\neq x'xy'$ and  either $x'xy'=\gamma p_D$ with $\gamma \in R$ and $D\in \TCB$, or
$$x'xy'=\gamma_1 p_A+\sum\limits_{i=2}^k\gamma_i s_{c^{p_i}}p_A,$$ where $c\in \lang$, $0\neq \gamma_i\in R$ for each $i\in \{1,...,k\}$, $1\leq p_i <p_{i+1}$, $A\subseteq F_{c^{p_i}}\cap Z_{c^{p_i}}$, and  $A\subseteq r(A,c^{p_2})$.
If $A\nsubseteq \bigcap \limits_{n=1}^\infty Z_{c^n}$ then, by Claim 8, there exists $x''\in \ualgshift$ such that $x'' (x'xy')=\gamma p_{\Tilde{D}}\neq 0$, where $\Tilde{D}\in \TCB$. In the case $A\subseteq \bigcap\limits_{n=1}^\infty Z_{c^n}$, we have that $A=\{c^\infty\}$, which implies that $(A,c)$ is a cycle without exit. By Remark \ref{minimal cycle}, there exists $\beta\in \lang$ such that $c=\beta^l$ for some $l\geq 2$, and such that $(A,\beta)$ has minimal length. Then, as $c=\beta^l$ and $$x'xy'=\gamma_1 p_A+\sum\limits_{i=2}^k\gamma_i s_{c^{p_i}}p_A,$$ we have that 
$$x'xy'=\gamma_1 p_A+\sum\limits_{i=2}^k\gamma_i s_{\beta^{l p_i}}p_A,$$ which  finishes the proof.
\end{proof}

\section{Some consequences of the Reduction Theorem}\label{consequencesofreduction}

Let $R$ be a commutative ring and $\osf$ be a subshift. In this section, we provide some results related to the subshift algebra $\ualgshift$ that emerge as a consequence of the Reduction Theorem. Specifically, we prove a Cuntz-Krieger uniqueness theorem for $\ualgshift$ and give some ring-theoretic properties of $\ualgshift$.

\begin{theorem}\label{ckuniqueness}{\rm {\bf (Cuntz-Krieger uniqueness theorem)}} Let $\osf$ be a subshift such that each cycle has an exit, $R$ be a commutative ring, and $\ualgshift$ be the subshift algebra. If $B$ is an $R$-algebra and $\Phi:\ualgshift\rightarrow B$ is an $R$-homomorphism such that $\Phi(\gamma p_A)\neq 0$ for each $\emptyset \neq A\in \TCB$ and $0\neq \gamma \in R$,  then $\Phi$ is injective.
\end{theorem}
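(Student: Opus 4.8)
The plan is to show that $\ker\Phi=0$ by a direct contradiction argument built entirely on top of the Reduction Theorem. Suppose there were a nonzero $x\in\ker\Phi$. I would apply Theorem~\ref{reduction theorem} to $x$ to obtain $\mu,\nu\in\ualgshift$ with $\mu x\nu\neq 0$ and with $\mu x\nu$ of one of the two forms listed there. The crucial observation is that the standing hypothesis that every cycle in $\osf$ has an exit means, by Definition~\ref{cyclexit}, that $\osf$ admits \emph{no} cycle without exit, hence in particular no minimal cycle without exit. Consequently alternative~(2) of the Reduction Theorem is vacuous, and we must be in case~(1): $\mu x\nu=\gamma p_D$ for some $\gamma\in R\setminus\{0\}$ and some $D\in\TCB$. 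Moreover $D\neq\emptyset$, since otherwise $p_D=p_\emptyset=0$ and $\mu x\nu=0$, contradicting $\mu x\nu\neq 0$.

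The contradiction is then immediate. On one hand, $\Phi$ is multiplicative and $x\in\ker\Phi$, so
\[\Phi(\mu x\nu)=\Phi(\mu)\Phi(x)\Phi(\nu)=0.\]
On the other hand, $\Phi$ is $R$-linear, so $\Phi(\mu x\nu)=\Phi(\gamma p_D)=\gamma\,\Phi(p_D)$, and this is nonzero by the hypothesis on $\Phi$ precisely because $D$ is a nonempty element of $\TCB$ and $\gamma\neq 0$. This forces $\ker\Phi=0$, i.e.\ $\Phi$ is injective.

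I do not anticipate a real obstacle here: all the substance has been absorbed into Theorem~\ref{reduction theorem}, and the proof is essentially a one-line deduction from it. The only points that need a moment's care are the two reductions just used: translating ``every cycle has an exit'' into the nonexistence of (minimal) cycles without exit so as to discard alternative~(2), and observing that $\mu x\nu\neq 0$ forces the set $D$ in alternative~(1) to be nonempty, which is exactly the hypothesis needed to invoke the assumed nonvanishing of $\Phi$ on the elements $\gamma p_D$.
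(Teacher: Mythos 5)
Your proposal is correct and follows essentially the same route as the paper: apply the Reduction Theorem, note that the hypothesis that every cycle has an exit rules out alternative~(2), and then derive a contradiction from $\Phi(\gamma p_D)\neq 0$ for the resulting nonzero element $\gamma p_D$ with $D\neq\emptyset$. The paper's proof is just a more compressed version of this same argument.
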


\begin{proof} Suppose that $\operatorname{ker}\Phi\neq 0$ and let $0\neq x\in \operatorname{ker}\Phi$. Since each cycle has an exit, by Theorem \ref{reduction theorem}, there exist $x',y'\in \ualgshift$ such that $0\neq x'xy'=\gamma p_A$, where $0\neq \gamma\in R$ and $ A\in \TCB$, $A \neq \emptyset$. Then, $0=\Phi(x')\Phi(x)\Phi(y')=\Phi(\gamma p_A)$, which contradicts the hypothesis. Therefore, $\operatorname{ker}\Phi=0$ and $\Phi$ is injective.
\end{proof}

Our next goal is to show that the subshift algebra is semiprimitive. For this, we will need a couple of lemmas. 
In the next one, we explore the structure of a corner of a subshift algebra by a projection over a minimal cycle without exit. In what follows, for $n<0$, we use the notation $s_c^n$ meaning the element $(s_c^*)^{-n}$.

\begin{lemma}\label{cornerlemma} Let $\osf$ be a subshift and let $(A,c)$ be a minimal cycle without exit. Then, $$p_A \ualgshift p_A=\left\{\sum\limits_{n=i}^j\gamma_n s_c^np_A: \gamma_n\in R \text{ and }i,j\in \Z\right\}.$$
\end{lemma}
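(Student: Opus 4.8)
The plan is to prove the two inclusions separately. The inclusion ``$\supseteq$'' is easy: since $(A,c)$ is a cycle without exit, $A \subseteq r(A,c)$, and using Lemma~\ref{chucrute}(2) together with $A \subseteq F_c \cap Z_c$ (which follows since $A = \{c^\infty\}$), one checks directly that $s_c p_A = p_A s_c p_A$ and $s_c^* p_A = p_A s_c^* p_A$, so each $s_c^n p_A$ lies in $p_A \ualgshift p_A$; closing under $R$-linear combinations gives the inclusion. The real content is ``$\subseteq$'': given an arbitrary $y \in \ualgshift$, I must show $p_A y p_A$ has the stated form.

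For the nontrivial inclusion, I would write a general element of $\ualgshift$ as $y = \sum_i \lambda_i s_{\alpha_i} p_{B_i} s_{\beta_i}^*$ using Proposition~\ref{prop:grading}, and then compute $p_A s_{\alpha_i} p_{B_i} s_{\beta_i}^* p_A$ term by term. Using Lemma~\ref{chucrute}(2) (i.e. $p_A s_\alpha = s_\alpha p_{r(A,\alpha)}$ and $s_\alpha^* p_A = p_{r(A,\alpha)} s_\alpha^*$) I can move both projections $p_A$ inward. The key structural fact is that since $A = \{c^\infty\}$, a relative range like $r(A,\alpha)$ is nonempty only when $\alpha$ is an initial segment of $c^\infty$, and similarly $p_A s_\alpha \neq 0$ forces $\alpha$ to be an initial segment of $c^\infty$, i.e. $\alpha = c^m c(1,t)$ for some $m \geq 0$ and $0 \le t < |c|$. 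The minimality of the cycle (via Lemma~\ref{minimalversion01} / Definition~\ref{mini}) is what guarantees that the only ``rotations'' of $c$ that fix $c^\infty$ are powers of $c$ itself, so that after absorbing projections the surviving words are genuinely powers of $c$ (or powers of $c^*$), not fractional shifts. Concretely, a nonzero term $p_A s_{\alpha_i} p_{B_i} s_{\beta_i}^* p_A$ forces both $\alpha_i$ and $\beta_i$ to be initial segments of $c^\infty$ and $B_i \cap A \neq \emptyset$, hence $B_i \supseteq A$; one then reduces $p_A s_{\alpha_i} p_A = s_{\alpha_i} p_A$-type expressions and, writing $\alpha_i = c^{m_i}u_i$, $\beta_i = c^{n_i}v_i$ with $u_i, v_i$ proper initial segments of $c$, uses $s_{u_i}^* s_{v_i}^* \cdots$ cancellations together with the cycle-without-exit condition to collapse $u_i = v_i$, leaving $s_c^{m_i - n_i} p_A$ up to a scalar. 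Summing over $i$ and grouping by the exponent $n = m_i - n_i \in \Z$ yields the finite sum $\sum_{n=i}^j \gamma_n s_c^n p_A$.

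The main obstacle I anticipate is the bookkeeping around \emph{which} terms survive and the careful use of minimality: one must rule out the possibility that, say, $p_A s_{c^m u} p_B s_{(c^n v)}^* p_A$ is nonzero with $u \ne v$ both nontrivial initial segments of $c$, and show that nonzeroness forces $u=v$ and then $s_{c^m u} p_A s_{(c^n u)}^*{} = $ (scalar) $s_c^{m-n}p_A$-style identities. This is exactly where Lemma~\ref{minimalversion01} is needed: if $u$ were a nontrivial initial segment of $c$ with $u c^\infty = c^\infty$ one would contradict minimality, and conversely the surviving configurations are precisely those where the ``fractional parts'' match up. A clean way to organize this is to first establish the auxiliary identities $p_A s_c^n p_A = s_c^n p_A$ for $n \ge 0$ and $p_A (s_c^*)^n p_A = (s_c^*)^n p_A$ for $n \ge 0$, then show that for \emph{any} $\alpha, \beta \in \lang$ the product $p_A s_\alpha p_B s_\beta^* p_A$ is either zero or equals $\gamma\, s_c^{m}p_A$ for a suitable $m \in \Z$ and $\gamma \in R$ (the scalar $\gamma$ being $1$ here, coming purely from the projection structure, but I keep $\gamma$ general for safety), and finally assemble by $\Z$-linearity. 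I would also remark that the sum on the right-hand side is well-defined as a set description since $p_A$ acts as a unit on the corner and $s_c^n p_A \cdot s_c^{n'} p_A = s_c^{n+n'} p_A$ makes the right-hand side visibly a subring, which is a useful sanity check on the statement.
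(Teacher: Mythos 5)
Your plan follows essentially the same route as the paper's proof: reduce to monomials $s_\alpha p_B s_\beta^*$ via the grading, push $p_A$ through with Lemma~\ref{chucrute}(2), use that $A=\{c^\infty\}$ makes the relevant relative ranges singletons so that $p_B$ is absorbed, invoke minimality (Lemma~\ref{minimalversion01}) to force the fractional initial segments of $\alpha$ and $\beta$ to vanish or coincide so the surviving words collapse to $s_c^n p_A$, and finish with the identity $p_A s_c^n p_A = s_c^n p_A$. The only slip is cosmetic: nonvanishing of $p_A s_\alpha p_B s_\beta^* p_A$ forces $B$ to contain the singleton $r(A,\alpha)\cap r(A,\beta)$, not $A$ itself, but the absorption argument is unaffected.
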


\begin{proof} Recall that $\ualgshift$ is the linear span of elements of the form $s_\alpha p_Bs_{\beta}^*$, where $B\in \TCB$ and $\alpha,\beta\in \lang$ (including the cases $|\alpha|=0$ or $|\beta|=0$). In what follows, we analyze an element $p_As_\alpha p_B s_\beta^* p_A$. 

\smallskip
\noindent{\it {\bf Claim}: Let $\alpha,\beta\in \lang$ and $B\in \TCB$. Then, $p_A s_\alpha p_B s_\beta^*p_A=0$ or $p_A s_\alpha p_B s_\beta^*p_A=p_A s_\alpha s_\beta^*p_A$.}
\smallskip

Indeed, notice that $$p_As_\alpha p_B s_\beta^* p_A=s_\alpha p_{r(A,\alpha)}p_Bp_{r(A,\beta)}s_\beta^*=s_\alpha p_{r(A,\alpha)\cap B\cap r(A,\beta)}s_\beta^*.$$
Since $A$ is a singleton set, we have that $r(A,\alpha)$ is also a singleton set. Hence, $r(A,\alpha)\cap B=\emptyset$ or $r(A,\alpha)\cap B=r(A,\alpha)$. Therefore, $r(A,\alpha)\cap B \cap r(A,\beta)=\emptyset$ or $r(A,\alpha)\cap B \cap r(A,\beta)=r(A,\alpha)\cap r(A,\beta)$. From this, we conclude that $p_As_\alpha p_B s_\beta^*=0$ or 
$$p_As_\alpha p_B s_\beta^*=s_\alpha p_{r(A,\alpha)\cap r(A,\beta)}s_\beta^*=p_A s_\alpha s_\beta^* p_A,$$ where the last equality follows by Lemma~\ref{chucrute}(2).

By the previous claim, $\ualgshift$ is generated by elements of the form $p_As_\alpha s_\beta^* p_A$, which we analyze next.

For $\alpha \in \lang$, notice that $p_As_\alpha=p_A s_\alpha s_\alpha^*s_\alpha=p_A p_{Z_\alpha}s_\alpha$. Since $A=\{c^\infty\}$, we have that $p_A p_{Z_\alpha}=0$ (and hence $p_As_\alpha=0$), unless $\alpha=c^p a$, where $a$ is an initial segment of $c$ and $p\in \N$. Similarly, $s_\beta^*p_A=0$, unless $\beta=c^qb$, where $q\in \N$ and $b$ is an initial segment of $c$. Hence, in what follows in this proof, $\alpha$ and $\beta$ are elements of $\lang$ such that $|\alpha|>0$, $|\beta|> 0$, and $\alpha=c^p a$, $\beta=c^q b$, where $p,q\in \N$ and $a$, $b$ are initial segments of $c$ such that $0\leq |a|<|c|$ and $0\leq |b|<|c|$.
Observe that
$$p_A s_\alpha p_A=p_A s_{\alpha}s_{\alpha}^*s_\alpha p_A=p_A s_{\alpha}p_{r(A,\alpha)}p_A=p_A s_\alpha p_{r(A,\alpha)\cap A}.$$ If  $r(A,\alpha)\cap A=\emptyset$ then $p_A s_\alpha p_A=0$. Otherwise, $r(A,\alpha)\cap A=\{c^\infty\}$ (because $A=\{c^\infty\}$). Then, $c^\infty\in r(A,\alpha)$ and hence $\alpha c^\infty =c^\infty$. Since $\alpha =c^p a$, the previous equality implies that $c^p ac^\infty=c^\infty$ and consequently $ac^\infty=c^\infty$. By Lemma~\ref{minimalversion01}, since $(A,c)$ is a minimal cycle, we conclude that $|a|=0$. Thus, $\alpha=c^p$ and $p_As_\alpha p_A=p_A s_c^p p_A$. Similarly, we have that $p_A s_\beta^* p_A=0$ or $p_A (s_c^*)^q p_A$. 

Next, we analyze the case $p_As_\alpha s_\beta^* p_A$. It follows from Lemma~\ref{chucrute}(2) that 
$$p_As_\alpha s_\beta^* p_A=s_\alpha p_{r(A,\alpha)\cap r(A,\beta)}s_\beta^*.$$ Since $A=\{c^\infty\}$, we have that either $r(A,\alpha)\cap r(A,\beta)=\emptyset$,  or $r(A,\alpha)\cap r(A,\beta)$ is a singleton set. In the first case, $p_As_\alpha s_\beta^*p_A=0$. To deal with the second case, assume that $r(A,\alpha)\cap r(A,\beta)=\{x\}$. 
Then, $\alpha x=c^\infty=\beta x$. As $\alpha =c^p a$, with $0\leq |a|<|c|$, the previous equality implies that 
$$c^p a x=c^\infty \text{ and } ax=c^\infty.$$
Similarly, $bx=c^\infty$. Now, let $a',b'\in \lang$ be such that $aa'=c$ and $bb'=c$. From $ax=c^\infty$, we obtain that $x=a'c^\infty$ and similarly, from $bx=c^\infty$, we have that $x=b'c^\infty$. Thus,
$$c^\infty=bx=ba' c^\infty, \text{ and } c^\infty=ax=ab'c^\infty.$$
Notice that $|a|+|a'|=|c|=|b|+|b'|$, which implies that $|ab'|+|ba'|=2|c|$. Hence, $|ab'|\leq |c|$ or $|ba'|\leq |c|$. We suppose, without loss of generality, that $|ab'|\leq |c|$. We have two possibilities. If $|ab'|<|c|$ then, by Lemma~\ref{minimalversion01}, we obtain that $|ab'|=0$ (since $ab'c^\infty=c^\infty$). Thus, $|b'|=0$ and consequently $|b|=|c|$, which is impossible, because $|b|<|c|$. Hence $|ab'|=|c|$. In this case, $$|a|+|b'|=|ab'|=|c|=|a|+|a'|,$$ 
which implies that $|a'|=|b'|$. Therefore, $|a|=|b|$ and consequently $a=b$. So, we have that  $\alpha=c^pb$, $\beta=c^qb$, and 
$$p_A s_\alpha s_\beta^*p_A=p_A s_c^ps_bs_b^*(s_c^*)^qp_A=p_A s_c^pp_{Z_b}(s_c^*)^qp_A.$$
From the claim at the beginning of this proof, we obtain that  
$$p_A s_\alpha s_\beta^*p_A=p_A s_c^pp_{Z_b}(s_c^*)^qp_A=0 \,\,\text{ or }\,\, p_A s_\alpha s_\beta^*p_A=p_A s_c^pp_{Z_b}(s_c^*)^qp_A=p_A s_c^p(s_c^*)^qp_A.$$ 
Suppose that $p_A s_\alpha s_\beta^*p_A=p_A s_c^p(s_c^*)^qp_A$ and that $p>q$. Write $p=q+n$ and note that 
\begin{align*}
    p_A s_\alpha s_\beta^*p_A&=p_A s_c^p(s_c^*)^qp_A=p_A s_c^ns_c^q (s_c^*)^qp_A\\
    &=p_A s_c^np_{Z_{c^q}}p_A=p_As_c^np_A.
\end{align*}
In a similar way, if $q>p$ then we write $q=p+m$ and $p_A s_\alpha s_\beta^*p_A=p_A(s_c^*)^mp_A$. For $p=q$, we have that $p_A s_\alpha s_\beta^*p_A=p_A$. \smallbreak

Finally, we will show that, for each $n\in \Z$, the equality $p_A s_c^np_A=s_c^np_A$ is true. When $n=0$, we have that $p_A s_c^np_A=p_A$. So, assume that $n\neq 0$. Since $A=\{c^\infty\}$, we obtain that $A=r(A,c^{|n|})\cap A=r(A, c^{|n|})$, for each $n\in \Z$. Thus, in the case that $n<0$, we get from the second item of Lemma~\ref{chucrute} that 
\begin{align*}
    p_A s_c^n p_A&=p_A (s_c^*)^{|n|}p_A= p_A p_{r(A,c^{|n|})}(s_c^*)^{|n|}\\
    &=p_{r(A, c^{|n|})}(s_c^*)^{|n|}=(s_c^*)^{|n|}p_A=s_c^n p_A.
\end{align*}
For $n>0$, we also have that $$p_A s_c^n p_A\stackrel{(\star)}{=}s_c^n p_{r(A,c^n)}p_A\stackrel{(\star\star)}{=}s_c^n p_A,$$ where $(\star)$ follows from Lemma~\ref{chucrute}(2) and $(\star\star)$ follows from $r(A,c^n) \cap A=A$.
\end{proof}

As a consequence of Lemma~\ref{cornerlemma}, we can identify the corner $p_A\ualgshift p_A$ with the Laurent polynomials algebra, see below.

\begin{lemma}\label{lem-Laurent-pol} Let $\osf$ be a subshift and $(A,c)$ be a minimal cycle without exit. Then,  $$\psi:p_A\ualgshift p_A\to R[x,x^{-1}]$$ given by
\[\psi(s^n_cp_A)=x^n, \quad \text{for all } \,n\in \Z,\]
is an $R$-algebra isomorphism.
\end{lemma}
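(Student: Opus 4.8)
The map $\psi$ is prescribed on the generating set $\{s_c^n p_A : n \in \Z\}$ of $p_A\ualgshift p_A$ (by Lemma~\ref{cornerlemma}, every element of the corner is an $R$-linear combination of these), so the first task is to check that $\psi$ is well defined as an $R$-linear map. This amounts to showing that the elements $\{s_c^n p_A : n \in \Z\}$ are $R$-linearly independent in $\ualgshift$; granting that, there is a unique $R$-linear extension of the assignment $s_c^n p_A \mapsto x^n$, and it is automatically bijective onto $R[x,x^{-1}] = \bigoplus_{n\in\Z} R x^n$.

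\textbf{Linear independence.} The natural tool is the $\Z$-grading of Proposition~\ref{prop:grading}: since $s_c^n p_A \in \ualgshift_{n|c|}$, distinct $n$ land in distinct graded components, so it suffices to show each individual $s_c^n p_A$ is nonzero, equivalently that $\gamma s_c^n p_A \neq 0$ for $\gamma \in R\setminus\{0\}$. For $n \geq 0$ this is clear because $s_c^n p_A = s_c^n p_{r(A,c^n)\cap A}\cdots$; more robustly, one passes through the isomorphism $\Phi$ of Theorem~\ref{thm:set-theoretic-partial-action} and computes the image of $\gamma s_c^n p_A$ in $\udalgshift \rtimes_\tau \F$: it is (a scalar multiple of) a nonzero function supported on the relevant cylinder, placed in the grading slot $c^n$, hence nonzero since $A \neq \emptyset$. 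For $n<0$ one argues the same way with $s_c^n = (s_c^*)^{-n}$, whose image sits in slot $c^{n}$ of the $\F$-grading. Thus all the $s_c^n p_A$ are nonzero, they live in pairwise distinct $\Z$-graded pieces, and so they are $R$-linearly independent; consequently $\psi$ is a well-defined $R$-linear isomorphism of $R$-modules.

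\textbf{Multiplicativity.} It remains to verify $\psi(uv) = \psi(u)\psi(v)$, and by $R$-bilinearity it is enough to check this on the generators, i.e. $\psi\big((s_c^m p_A)(s_c^n p_A)\big) = x^{m+n}$ for all $m,n \in \Z$. Using $p_A s_c^n p_A = s_c^n p_A$ (the last displayed identity in the proof of Lemma~\ref{cornerlemma}) together with Lemma~\ref{chucrute}(2) and the relations $s_c s_c^* s_c = s_c$, $s_c^* s_c s_c^* = s_c^*$, one reduces $(s_c^m p_A)(s_c^n p_A) = s_c^m p_A s_c^n p_A = s_c^m s_c^n p_A$ in each of the cases $m,n \geq 0$; $m,n \leq 0$; and mixed sign, where in the mixed case the telescoping $s_c^{|m|}(s_c^*)^{|n|} = s_c^{|m|-|n|} p_{Z_{c^{\min(|m|,|n|)}}}$ (resp. $p_{F_{c^{\min}}}(s_c^*)^{\cdots}$) absorbs the projection after multiplying by $p_A$, exactly as in the $p > q$ / $q > p$ / $p = q$ case analysis already carried out inside the proof of Lemma~\ref{cornerlemma}. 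In every case $s_c^m p_A s_c^n p_A = s_c^{m+n} p_A$, so $\psi$ is multiplicative, and since $\psi(s_c^0 p_A) = \psi(p_A) = 1 = x^0$ it is unital. Hence $\psi$ is an $R$-algebra isomorphism.

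\textbf{Main obstacle.} The only genuinely substantive point is the linear independence, i.e.\ establishing that $s_c^n p_A \neq 0$ for every $n \in \Z$ (and that scalar multiples stay nonzero); this is where the partial-skew-group-ring realization and its $\F$-grading are essential, since the abstract presentation of $\ualgshift$ alone does not make nonvanishing transparent. The multiplicativity is purely the computation reproduced inside Lemma~\ref{cornerlemma} and requires no new ideas.
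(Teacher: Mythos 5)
Your proposal is correct and follows essentially the same route as the paper: well-definedness from the spanning set of Lemma~\ref{cornerlemma} together with nonvanishing and uniqueness of coefficients via the grading coming from Theorem~\ref{thm:set-theoretic-partial-action}, and multiplicativity from $p_A s_c^n p_A = s_c^n p_A$ and $s_c^n s_c^m p_A = s_c^{n+m} p_A$. The only cosmetic difference is that you get bijectivity by matching bases, whereas the paper invokes the universal property of $R[x,x^{-1}]$ to build the inverse.
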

\begin{proof} It follows from Lemma \ref{cornerlemma} that each $y\in p_A \ualgshift p_A$ has the form $y=\sum\limits_{n=i}^j\gamma_n s_c^n p_A$, where $0\neq \gamma_n\in R$, and $i,j\in \Z$. By Theorem \ref{thm:set-theoretic-partial-action},  we have that $\gamma_ns_c^n p_A\neq 0$ for each $n\in\Z$. Moreover, $y$ can be uniquely written as $y=\sum\limits_{n=i}^j\gamma_n s_c^n p_A$ (with $0\neq \gamma_n \in  R$).
So, $\psi:p_A \ualgshift p_A \rightarrow R[x,x^{-1}]$ given by $\psi(y)=\sum\limits_{n=i}^j \gamma_n x^n$ is a well-defined linear map. 
At the end of the proof of Lemma~\ref{cornerlemma}, we showed that $p_A s_c^n p_A=s_c^n p_A$. Therefore, for $m,n\in \Z$, we have that $$(s_c^np_A)(s_c^m p_A)=s_c^ns_c^mp_A.$$
Furthermore, observe that $s_c^ns_c^m p_A=s_c^{n+m}p_A$ for each $n,m\in \Z$. 
Hence, we have that $$\psi(s_c^np_A)\psi(s_c^mp_A)=x^nx^m=x^{n+m}=\psi(s_c^{n+m}p_A)=\psi((s_c^np_A)(s_c^mp_A)),$$ for each $n,m\in \Z$. 
Thus, $\psi$ is a homomorphism of $R$-algebras.
Now, using the universal property of $R[x,x^{-1}]$, there exists a homomorphism $\phi:R[x,x^{-1}]\rightarrow p_A \ualgshift p_A$ such that $\phi(x^n)=s_c^np_A$, for all $n\in \Z$. Since $\phi=\psi^{-1}$, the result is proved.
\end{proof}

In order to investigate ring-theoretic properties of $\ualgshift$, we recall the following notions. A ring $S$ is said to be \emph{semiprime} if whenever $I^2=0$ for an ideal $I$ of $S$, then $I=0$. 
 The \emph{Jacobson radical} $J(A)$ of an algebra $A$ is the intersection of all primitive ideals in $A$, whereas the prime radical $\operatorname{rad}(A)$ is the intersection of all prime ideals in $A$. Furthermore, a ring $S$ is called \emph{semiprimitive} if the Jacobson radical $J(S)$ of $S$ is zero. Next, we show that the subshift algebra over a field is semiprimitive and semiprime.

\begin{proposition}\label{ecologico} Let $\osf$ be a subshift and $R$ be a field. Then, $\ualgshift$ is semiprimitive.
 \end{proposition}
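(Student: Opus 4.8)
The plan is to show that the Jacobson radical $J(\ualgshift)$ is zero by using the Reduction Theorem together with the fact that $J$ of a ring contains no nonzero idempotent (more precisely, if $e$ is a nonzero idempotent then $e\notin J$, since $J$ contains no nonzero idempotents of the ring). Suppose, for contradiction, that $J(\ualgshift)\neq 0$ and pick a nonzero $x\in J(\ualgshift)$. Since $J$ is a (two-sided) ideal, for any $\mu,\nu\in\ualgshift$ we have $\mu x\nu\in J(\ualgshift)$. Applying Theorem~\ref{reduction theorem} to $x$, we obtain $\mu,\nu$ with $0\neq \mu x\nu\in J(\ualgshift)$ of one of the two stated forms.

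First I would dispose of case (1): $\mu x\nu=\gamma p_D$ with $\gamma\in R\setminus\{0\}$ and $\emptyset\neq D\in\TCB$. Since $R$ is a field, $\gamma$ is invertible, so $p_D=\gamma^{-1}(\gamma p_D)\in J(\ualgshift)$. But $p_D$ is a nonzero idempotent (it is nonzero because $\Phi(p_D)=1_D\delta_\eword\neq 0$ as $D\neq\emptyset$, using Theorem~\ref{thm:set-theoretic-partial-action}), and the Jacobson radical of any ring contains no nonzero idempotents; this contradiction rules out case (1).

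Next I would handle case (2): $\mu x\nu=\gamma_1 p_A+\sum_{i=2}^k\gamma_i s_{\beta^{q_i}}p_A\in J(\ualgshift)$, where $(A,\beta)$ is a minimal cycle without exit. Here the key is to pass to the corner $p_A\ualgshift p_A$. Since $A\subseteq F_{\beta^{q_i}}\cap Z_{\beta^{q_i}}$ in the conclusion of the Reduction Theorem, one checks that $p_A(\mu x\nu)p_A=\mu x\nu$ (each summand $s_{\beta^{q_i}}p_A$ already satisfies $p_A s_{\beta^{q_i}}p_A=s_{\beta^{q_i}}p_A$ by the computations at the end of the proof of Lemma~\ref{cornerlemma}, using $A\subseteq r(A,\beta^{q_i})$), so $\mu x\nu$ is a nonzero element of the corner $p_A\ualgshift p_A$. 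Now I invoke the general fact that $J(p_A\ualgshift p_A)=p_A\,J(\ualgshift)\,p_A$ for a corner by an idempotent, so $\mu x\nu\in J(p_A\ualgshift p_A)$. By Lemma~\ref{lem-Laurent-pol}, $p_A\ualgshift p_A\cong R[x,x^{-1}]$, and under $\psi$ the element $\mu x\nu$ maps to the nonzero Laurent polynomial $\gamma_1+\sum_{i=2}^k\gamma_i x^{q_i}\in R[x,x^{-1}]$ (nonzero since the exponents $0<q_2<\cdots<q_k$ are distinct and the $\gamma_i$ are nonzero). But $R[x,x^{-1}]$ over a field is a (commutative) semiprimitive ring — it is a localization of the PID $R[x]$, hence a Jacobson ring with zero Jacobson radical (its Jacobson radical equals its nilradical, which is $0$ since it is a domain). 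This contradicts $\mu x\nu\in J(R[x,x^{-1}])\setminus\{0\}$, completing the argument and showing $J(\ualgshift)=0$.

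The main obstacle I anticipate is justifying the corner identity $J(p_A\ualgshift p_A)=p_A J(\ualgshift)p_A$ and the semiprimitivity of $R[x,x^{-1}]$ cleanly; both are standard ring theory (the corner formula holds for any idempotent $e$ in any ring, and the Jacobson radical of $R[x,x^{-1}]$ over a field is zero because it is a commutative reduced Jacobson ring), so the real content is simply organizing the case split from the Reduction Theorem and verifying that the reduced element genuinely lies in the corner and is nonzero there — for which the containments $A\subseteq F_{\beta^{q_i}}\cap Z_{\beta^{q_i}}$ and $A\subseteq r(A,\beta^{q_i})$ from Theorem~\ref{reduction theorem} and the multiplication rules in Lemma~\ref{cornerlemma} are exactly what is needed.
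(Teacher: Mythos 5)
Your proposal is correct and follows essentially the same route as the paper: apply the Reduction Theorem, rule out case (1) because $J(\ualgshift)$ contains no nonzero idempotents (using invertibility of $\gamma$), and in case (2) observe via Lemma~\ref{cornerlemma} that the reduced element lies in the corner $p_A\ualgshift p_A$, whose Jacobson radical is $J(\ualgshift)\cap p_A\ualgshift p_A$, and then contradict the semiprimitivity of $R[x,x^{-1}]$ through the isomorphism of Lemma~\ref{lem-Laurent-pol}. The only cosmetic difference is that you verify $p_A(\mu x\nu)p_A=\mu x\nu$ directly and justify $J(R[x,x^{-1}])=0$ explicitly, whereas the paper cites Jacobson for the corner identity and takes the Laurent-polynomial fact as known.
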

\begin{proof}
Let $J=J(\ualgshift)$ be the Jacobson radical of $\ualgshift$ and suppose that there exists $0\neq x\in J$.  By 
Theorem~\ref{reduction theorem}, there exist $\mu,\nu\in \ualgshift$ such that  $0\neq \mu x\nu$ and either $\mu x\nu=\gamma p_D$, with $0\neq \gamma\in R$ and $D\in \TCB$, or 
 \[    \mu x\nu=\gamma_1 p_A +\sum\limits_{i=2}^k \gamma_i s_{\beta^{q_i}}p_A,
\]
where $(A,\beta)$ is a minimal cycle without exit, $q_i\in \N\setminus \{0\}$ and $0\neq \gamma_i\in R$. Observe that if $\mu x\nu=\gamma p_D$, then  
$p_D=\gamma^{-1}(\mu x\nu)\in J$. Since $J$ contains no nonzero idempotents, we have a contradiction. Hence, $\mu x\nu$ is given by the sum above. By Lemma~\ref{cornerlemma} we have that $\mu x\nu$ is a nonzero element in $J\cap p_A\ualgshift p_A$. By \cite[\S III.7, Proposition 1]{Jac64}, $J\cap p_A\ualgshift p_A=J\big(p_A\ualgshift p_A\big)$. Using Lemma \ref{lem-Laurent-pol}, we obtain a nonzero element in $J(R[x,x^{-1}])$, which is again a contradiction. Thus, $J=0$ and the result follows.
\end{proof}

\begin{remark}\label{jacare}
We recall that an algebra $A$ is semiprime whenever $\operatorname{rad}(A)=0$. Since every primitive ideal of $A$ is a prime ideal of $A$, it follows that if $A$ is semiprimitive then $A$ is semiprime. Hence, by the previous Proposition \ref{ecologico}, if  $\osf$ is a subshift and $R$ is a field then $\ualgshift$ is semiprime.
\end{remark}

We have described in the remark above how to show that $\ualgshift$ is semiprime, provided that $\osf$ is a subshift and $R$ is a field. However, we can improve this result as follows.

\begin{corollary}\label{banana} Let $\osf$ be a subshift and $R$ be a ring without zero divisors. Then, $\ualgshift$ is semiprime.
\end{corollary}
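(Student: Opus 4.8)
The plan is to reduce the statement over an arbitrary domain $R$ to the field case already handled in Proposition~\ref{ecologico}, or else to argue directly using the Reduction Theorem together with the Laurent-polynomial description of corners. The most economical route is the direct one. Suppose $I$ is a nonzero ideal of $\ualgshift$ with $I^2=0$; pick $0\neq x\in I$. By the Reduction Theorem (Theorem~\ref{reduction theorem}) there are $\mu,\nu\in\ualgshift$ with $0\neq\mu x\nu$ and either $\mu x\nu=\gamma p_D$ with $0\neq\gamma\in R$ and $D\in\TCB$, or $\mu x\nu=\gamma_1 p_A+\sum_{i=2}^k\gamma_i s_{\beta^{q_i}}p_A$ for a minimal cycle without exit $(A,\beta)$, with all $\gamma_i\in R\setminus\{0\}$. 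In either case $\mu x\nu\in I$ and $(\mu x\nu)^2\in I^2=0$, so it suffices to show that neither type of element is a nonzero square-zero element when $R$ has no zero divisors.

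For the first case, $\gamma p_D$ is a scalar multiple of an idempotent: $(\gamma p_D)^2=\gamma^2 p_D$, which is nonzero because $\gamma^2\neq 0$ (as $R$ is a domain) and $p_D\neq 0$ in $\ualgshift$ (using Theorem~\ref{thm:set-theoretic-partial-action}, since $D\neq\emptyset$); so $\gamma p_D$ cannot lie in a square-zero ideal. For the second case, the key observation is that $y:=\mu x\nu$ lies in the corner $p_A\ualgshift p_A$, because $p_A y=y=y p_A$ (each summand $s_{\beta^{q_i}}p_A$ satisfies $p_A s_{\beta^{q_i}}p_A = s_{\beta^{q_i}}p_A$, as shown at the end of the proof of Lemma~\ref{cornerlemma}, and $p_A p_A=p_A$). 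Then $y^2\in p_A\ualgshift p_A$ as well, and since $p_A I p_A$ is an ideal of the corner $p_A\ualgshift p_A$ with $(p_A I p_A)^2\subseteq p_A I^2 p_A=0$, it is enough to know that $p_A\ualgshift p_A$ is semiprime. By Lemma~\ref{lem-Laurent-pol}, $p_A\ualgshift p_A\cong R[x,x^{-1}]$, and the Laurent polynomial ring over a domain is itself a domain, hence in particular semiprime; so $y^2=0$ forces $y=0$, a contradiction.

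Combining the two cases, no nonzero ideal $I$ with $I^2=0$ can exist, so $\ualgshift$ is semiprime. I expect the only point requiring care to be the verification that $\mu x\nu$ genuinely sits inside the corner $p_A\ualgshift p_A$ in the second case — this is where one must quote precisely the identity $p_A s_c^n p_A=s_c^n p_A$ (equivalently $s_{\beta^{q_i}}p_A = p_A s_{\beta^{q_i}}p_A$, noting $\beta^{q_i}$ is a power of $\beta$ and $A\subseteq Z_{\beta^{q_i}}\cap F_{\beta^{q_i}}$) established in Lemma~\ref{cornerlemma}; everything else is a formal consequence of $R$ being a domain and of $R[x,x^{-1}]$ being a domain. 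One could alternatively phrase the whole argument via $\operatorname{rad}(\ualgshift)$ and the analogue of \cite[\S III.7, Proposition 1]{Jac64} for the prime radical, mirroring Proposition~\ref{ecologico} verbatim with "semiprimitive/Jacobson radical" replaced by "semiprime/prime radical"; but the direct square-zero-ideal argument above avoids needing that corner–radical compatibility statement for the prime radical.
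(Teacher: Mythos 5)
Your proof is correct, and its overall shape matches the paper's: apply the Reduction Theorem to a nonzero element of a putative square-zero ideal and show that neither output form can square to zero. Where you diverge is in the cycle case. The paper handles it in one line using the $\F$-grading of Theorem~\ref{thm:set-theoretic-partial-action}: the degrees occurring in $\mu x\nu=\gamma_1 p_A+\sum_{i\geq 2}\gamma_i s_{\beta^{q_i}}p_A$ are $\omega,\beta^{q_2},\dots,\beta^{q_k}$, so the identity-degree component of $(\mu x\nu)^2$ is $\gamma_1^2 p_A$, which is nonzero since $R$ has no zero divisors and $A\neq\emptyset$; hence $(\mu x\nu)^2\neq 0$ and $I^2\neq 0$. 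You instead place $\mu x\nu$ in the corner $p_A\ualgshift p_A$ (via the identity $p_A s_{\beta^{q_i}}p_A=s_{\beta^{q_i}}p_A$ from Lemma~\ref{cornerlemma}, which you correctly flag as the point needing care) and then invoke Lemma~\ref{lem-Laurent-pol} together with the fact that $R[x,x^{-1}]$ is a domain when $R$ is; this is the machinery the paper reserves for the semiprimitivity result (Proposition~\ref{ecologico}), and it is perfectly valid here, just heavier than necessary. One small remark on your first case: the inference from ``$\gamma^2\neq 0$ and $p_D\neq 0$'' to $\gamma^2 p_D\neq 0$ is not automatic in an abstract $R$-algebra (there could be $R$-torsion), but it does follow from the realization in Theorem~\ref{thm:set-theoretic-partial-action}, which you cite, so this is fine. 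In sum: the paper's grading argument is shorter and self-contained, while your corner argument is more structural (it shows the reduced element lies in a domain, hence is not even nilpotent) at the cost of importing Lemmas~\ref{cornerlemma} and \ref{lem-Laurent-pol}.
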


\begin{proof} Let $I$ be a nonzero ideal of $\ualgshift$. Applying Theorem~\ref{reduction theorem} to a nonzero element of $I$, we obtain a nonzero element $x\in I$ of the form $x=\gamma p_D$, with $\gamma\in R$ and $D\in \TCB$, or 
$x=\gamma_1 p_A +\sum\limits_{i=2}^k \gamma_i s_{\beta^{q_i}}p_A,$
where $(A,\beta)$ is a minimal cycle without exit, $q_i\in \N\setminus \{0\}$, and $0\neq \gamma_i\in R$. Using the grading given by Theorem~\ref{thm:set-theoretic-partial-action} and from the fact that $R$ has no zero divisors, we get that $x^2\neq 0$ and therefore $I^2\neq 0$, as desired.
\end{proof}

{\bf Conflict of interest statement:}
All authors declare that they have no conflicts of interest. 

{\bf Data availability statement:}  The authors confirm that the data supporting the findings of this study are available within the article.

\bibliographystyle{abbrv}
\bibliography{ref}

\end{document}